\documentclass{amsart}
\usepackage{amsmath,graphicx,amsfonts,amssymb,amsthm,hyperref,amscd,verbatim,tikz,tikz-cd}
\providecommand{\U}[1]{\protect\rule{.1in}{.1in}}
\setlength{\topmargin}{-.25in}
\setlength{\textheight}{9.25in}
\setlength{\oddsidemargin}{0.0in}
\setlength{\evensidemargin}{0.0in}
\setlength{\textwidth}{6.5in}

\def\theenumi{\arabic{enumi}}

\def\theenumii{\alph{enumii}}
\def\p@enumii{\theenumi.}

\def\theenumiii{\arabic{enumiii}}
\def\p@enumiii{(\theenumi)(\theenumii)}

\def\p@enumiv{\p@enumiii.\theenumiii}
\parindent=0pt

\newcommand{\dmo}{\DeclareMathOperator}

\newcommand{\Q}{\mathbb{Q}}

\newcommand{\Ca}{\mathcal{C}}

\newcommand{\as}{\text{*}}

\newcommand{\Sn}{\mathfrak{S}}
\newcommand{\arXiv}[1]{\href{http://arxiv.org/abs/#1}{\nolinkurl{arXiv:#1}}}
\newcommand{\arXivV}[2]{\href{http://arxiv.org/abs/#1}{\nolinkurl{arXiv:#1v#2}}}
\dmo\FI{FI}
\dmo\Mod{-Mod}
\dmo\fMod{-mod}
\dmo\Ind{Ind}
\dmo\Res{Res}
\dmo\Tab{Tab}
\dmo\Hom{Hom}
\dmo\wt{weight}

\dmo\End{End}

\newtheorem{theorem}{Theorem}[section]
\newtheorem*{thm}{Theorem}
\newtheorem{thmab}{Theorem}

\newtheorem{corollary}[theorem]{Corollary}
\newtheorem{lemma}[theorem]{Lemma}
\newtheorem{proposition}[theorem]{Proposition}
\theoremstyle{definition}
\newtheorem{definition}[theorem]{Definition}
\newtheorem{remark}[theorem]{Remark}

\renewenvironment{proof}[1][\proofname]{{\bfseries #1\\}}{\qed}

\title{Generalized Representation Stability and $\FI_d$-modules}
\date{}
\author{Eric Ramos}
\address{Department of Mathematics, University of Wisconsin - Madison.}
\email{eramos@math.wisc.edu}

\thanks{The author was supported by NSF grant DMS-1502553}
\begin{document}

\maketitle

\begin{abstract}
In this note we consider the complex representation theory of $\FI_d$, a natural generalization of the category $\FI$ of finite sets and injections. We prove that finitely generated $\FI_d$-modules exhibit behaviors in the spirit of Church-Farb representation stability theory, generalizing a theorem of Church, Ellenberg, and Farb which connects finite generation of $\FI$-modules to representation stability.
\end{abstract}

\section{Introduction}

Let $d$ be a fixed positive integer. We write $\FI_d$ denote the category whose objects are the sets $[n] = \{1,\ldots, n\}$ and whose maps are pairs $(f,g):[n] \rightarrow [m]$ of an injection $f:[n] \rightarrow [m]$ along with a $d$-coloring of the compliment of the image of $f$. Composition in this category is defined in the most natural way (see Definition \ref{catfid}). If $k$ is a commutative ring, then an \textbf{$\FI_d$-module} over $k$ is a functor $V:\FI \rightarrow \text{Mod}_k$. For any object $[n]$ of $\FI_d$, one notes that $\End_{\FI_d}([n]) = \Sn_n$. It follows from this that each of the $k$-modules $V([n])$ is, in fact, a $k[\Sn_n]$-module.\\

One immediately observes that if $d = 1$, then $\FI_1$ is naturally equivalent to the category $\FI$ of finite sets and injections. $\FI$-modules were first discussed by Church, Ellenberg, and Farb in \cite{CEF} due to their connection with Church-Farb representation stability \cite{CF}. Since the paper of Church, Ellenberg, and Farb, $\FI$-modules have been observed to be applicable in a wide range of subjects (see, for example, \cite{CEFN} \cite{CE} \cite{GL} \cite{W} \cite{N}). The goal of this paper is to consider the relationship between $\FI_d$-modules and a kind of generalized representation stability.\\

We will write write $V_n$ to denote the value of the functor $V$ on the set $[n]$. The \textbf{elements} of the module $V$ are the members of the set $\sqcup_n V_n$. We say that an $\FI_d$-module $V$ is \textbf{finitely generated} if there is a finite set of elements which generate the entire module (see Definition \ref{fg}). The following theorem is one of the main results of \cite{CEF}, relating finite generation of $\FI$-modules to representation stability.\\

\begin{thm}[\cite{CEF}, Theorem 1.13]
Let $V$ be an $\FI$-module over a field $k$ of characteristic 0, and let $\phi_n:V_n \rightarrow V_{n+1}$ denote the map induced by the standard inclusion $[n] \hookrightarrow [n+1]$. Then $V$ is finitely generated if and only if $V_n$ is finite dimensional for all $n$, and for all $n \gg 0$
\begin{enumerate}
\item $\phi_n$ is injective;
\item $\phi_n(V_n)$ spans $V_{n+1}$ as an $\Sn_{n+1}$-representation;
\item there a decomposition:
\[
V_n \cong \bigoplus_{\lambda}c_\lambda S(\lambda)_n
\]
where the sum is over partitions $\lambda \vdash m$ with $m \leq n$, and the coefficients $c_\lambda$ do not depend on $n$.\\
\end{enumerate}
\end{thm}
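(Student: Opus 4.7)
The plan is to prove the two directions separately, with the implication from finite generation to the stability conditions being the main content.

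For the easy direction, suppose each $V_n$ is finite dimensional and that (1)--(3) hold for $n \geq N$. The claim is that $\bigoplus_{n \leq N} V_n$, which is finite dimensional by hypothesis, generates $V$. Every morphism $[n] \to [n+1]$ in $\FI$ factors as the standard inclusion followed by some $\sigma \in \Sn_{n+1}$, so condition (2) forces the $\FI$-submodule generated by $V_n$ to contain all of $V_{n+1}$. Iterating, $V_m$ lies in the $\FI$-submodule generated by $V_N$ for every $m \geq N$, giving finite generation.

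For the hard direction, suppose $V$ is finitely generated, so that $V$ is a quotient of a finite direct sum of ``free'' $\FI$-modules $M(m)_n = k[\Hom_{\FI}([m],[n])]$. This immediately yields finite dimensionality of each $V_n$, since $M(m)_n$ has dimension $n!/(n-m)!$. For a single $M(m)$ one has $M(m)_n \cong \Ind_{\Sn_{n-m}}^{\Sn_n} k$, and Pieri's rule gives an explicit decomposition $M(m)_n = \bigoplus_\lambda c_\lambda S(\lambda)_n$ with coefficients eventually independent of $n$, while the branching rule for symmetric groups directly verifies (1) and (2) for $M(m)$ itself.

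The main obstacle is showing conditions (1)--(3) pass to an arbitrary finitely generated quotient $V$ of $\bigoplus_i M(m_i)$. To handle this I would introduce two invariants: the \emph{weight} of an $\FI$-module, controlling which irreducibles $S(\lambda)_n$ can appear, and the \emph{stability degree}, controlling when $\phi_n$ becomes injective and when $\phi_n(V_n)$ spans $V_{n+1}$ as an $\Sn_{n+1}$-representation. Both are finite for each $M(m)$. Finite weight is preserved under quotients, but finite stability degree is more subtle, since a quotient can introduce new elements of $\ker \phi_n$. Here Noetherianity of finitely generated $\FI$-modules over a characteristic zero field becomes crucial: the kernel of $\bigoplus_i M(m_i) \twoheadrightarrow V$ is itself finitely generated, which allows an induction that transports finiteness of stability degree from the free modules to $V$. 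Translating finite weight and stability degree back into the language of the theorem then yields (1)--(3) simultaneously.
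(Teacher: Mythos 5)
The paper quotes this theorem from \cite{CEF} without proof, so strictly there is no in-paper argument to compare against; your sketch is a faithful outline of the proof in the cited source. It is worth observing, though, that the route you take --- finite weight plus finite stability degree, with Noetherianity controlling the relation degree and an induction through a free presentation --- is \emph{not} how this paper proves its own generalization, Theorem~\ref{genrepstab}. There the key input is instead the Sam--Snowden Grothendieck-group decomposition (Theorem~\ref{ktheory}), which reduces everything to free modules $M(W)$ modulo classes with Hilbert function $o(d^n)$, and the latter are eliminated by a hook-length estimate showing $\dim_k S(\lambda)_{n_1+l,\dots,n_d+l}$ grows like $d^n$ up to polynomial factors. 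The stability-degree route you describe gives sharper effective bounds on when stability begins and needs no external structural theorem about the Grothendieck group; the $K$-theoretic route is shorter and transfers more cleanly to $\FI_d$, where the analogue of a stability degree is harder to control. Two small slips in your sketch worth flagging: injectivity of $\phi_n$ on $M(m)$ and the spanning condition are elementary (the transition maps send basis vectors to basis vectors injectively, and spanning is an orbit count on injections $[m]\hookrightarrow [n+1]$), not consequences of the branching rule; and the ``induction'' you invoke to transport stability degree to $V$ is precisely the bound $\operatorname{stab-deg}(V)\le\max(\text{gen-deg},\text{rel-deg})$ of \cite{CEF}, proved via the coinvariants functors --- that verification is the technical heart of the argument and a full write-up would have to supply it.
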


If $\lambda = (\lambda_1,\ldots,\lambda_h)$ is a partition of size $|\lambda| := \sum_i \lambda_i$, and $n \geq |\lambda| + \lambda_1$ is some positive integer, then $S(\lambda)_n$ is, by definition, the irreducible complex $\Sn_n$-representation associated to the partition
\[
\lambda[n] := (n-|\lambda|,\lambda_1,\ldots,\lambda_h).
\]
One important take away from this theorem is that the growth of the modules $V_n$ eventually becomes predictable, and this prediction is based on the combinatorial data of the partitions $\lambda$.\\

We will find that more general $\FI_d$-modules are far less rigid in what irreducible representations are allowed to appear as summands of their constituent modules. To make this precise, we will first need to generalize the representations $S(\lambda)_n$. If $n_1 \geq \ldots \geq n_r \geq |\lambda| + \lambda_1$ is a sequence of positive integers, then we define $S(\lambda)_{n_1,\ldots,n_r}$ to be the the irreducible representation of $\Sn_{(\sum_i n_i) - (r-1)|\lambda|}$ associated to the partition 
\[
\lambda[n_1,\ldots,n_r] := (n_1 - |\lambda|,\ldots, n_r - |\lambda|,\lambda_1,\ldots,\lambda_h).
\]
The majority of this paper will be working towards the proof of the following theorem.\\

\begin{thmab}\label{genrepstab}
Let $V$ be an $\FI_d$-module over a field $k$ of characteristic 0, and write $\phi^i_n:V_n \rightarrow V_{n+1}$ for the map induced by the pair of the standard inclusion $[n] \hookrightarrow [n+1]$ and the color $i$. Then $V$ is finitely generated if and only if $V_n$ is finite dimensional for all $n \geq 0$, and for all $n \gg 0$:
\begin{enumerate}
\item $\cap_i \ker\phi^i_n = \{0\}$;
\item $\sum_i \phi_n^i(V_n)$ spans $V_{n+1}$ as an $\Sn_{n+1}$-representation;
\item for any partition $\lambda$, and any integers $n_1 \geq \ldots \geq n_d \geq |\lambda| + \lambda_1$, let $c_{\lambda,n_1,\ldots,n_d}$ be the multiplicity of $S(\lambda)_{n_1,\ldots,n_d}$ in $V_{\sum_i n_i - (d-1)|\lambda|}$. Then the quantity $c_{\lambda,n_1+l,\ldots,n_d+l}$ is independent of $l$ for $l \gg 0$.\label{c3}\\
\end{enumerate}
\end{thmab}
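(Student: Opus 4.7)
The proof splits into an elementary reverse direction and a substantive forward direction. For the reverse direction, suppose each $V_n$ is finite-dimensional and conditions (1)--(3) hold for $n \geq N$. Let $W \subseteq V$ be the $\FI_d$-submodule generated by the finitely many elements of $\bigsqcup_{n \leq N} V_n$. By induction on $n$, if $W_n = V_n$ for some $n \geq N$, then $W_{n+1} \supseteq \sum_i \phi^i_n(V_n)$, whose $\Sn_{n+1}$-span is $V_{n+1}$ by (2); hence $W = V$ and $V$ is finitely generated. Conditions (1) and (3) are not needed for this direction.

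For the forward direction, the first step is to decompose the representable $\FI_d$-module $M(m)$, defined by $M(m)_n = k[\FI_d([m],[n])]$. Parametrizing morphisms $[m] \to [n]$ by an ordered $m$-tuple of distinct elements together with a $d$-coloring of the complement, and grouping by the profile $(c_1,\ldots,c_d)$ of color class sizes, yields the $\Sn_n$-equivariant decomposition
\[
M(m)_n \;\cong\; \bigoplus_{\substack{c_1,\ldots,c_d \geq 0 \\ c_1+\cdots+c_d = n-m}} \Ind_{\Sn_m \times \Sn_{c_1} \times \cdots \times \Sn_{c_d}}^{\Sn_n}\bigl(k[\Sn_m] \otimes k \otimes \cdots \otimes k\bigr).
\]
After expanding $k[\Sn_m] = \bigoplus_{\mu \vdash m}(\dim S^\mu)\,S^\mu$ and applying the iterated Pieri rule, the multiplicity of an irreducible $S^\nu$ in $M(m)_n$ becomes a sum, over all compositions $(c_1,\ldots,c_d)$ of $n-m$, of the numbers of chains $\mu = \mu^{(0)} \subset \mu^{(1)} \subset \cdots \subset \mu^{(d)} = \nu$ in which each $\mu^{(i)}/\mu^{(i-1)}$ is a horizontal strip of size $c_i$. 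Conditions (1) and (2) for $M(m)$ are immediate, since each individual $\phi^i_n$ is injective on $M(m)$ and $M(m)_{n+1}$ is spanned as an $\Sn_{n+1}$-representation by $\sum_i \phi^i_n(M(m)_n)$.

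The heart of the proof is condition (3) for $M(m)$: when $\nu = \lambda[n_1+l,\ldots,n_d+l]$, the multiplicity above is independent of $l$ for $l \gg 0$. Heuristically, once every $n_i$ is large compared to $|\lambda| + m$, the extra box added to the $i$th long row of $\nu$ when $l$ increases by one lies too far to the right to be compatible with any horizontal strip other than $\mu^{(i)}/\mu^{(i-1)}$, giving a canonical bijection between fillings at parameter $l$ and parameter $l+1$ that sends $(c_1,\ldots,c_d) \mapsto (c_1+1,\ldots,c_d+1)$. Making this bijection rigorous, and pinning down the correct threshold on $l$, is the main combinatorial obstacle in the argument.

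To pass from $M(m)$ to an arbitrary finitely generated $V$, choose a finite surjection $P := \bigoplus_j M(m_j) \twoheadrightarrow V$ with kernel $K$; by local Noetherianity of $\FI_d$-modules over a characteristic-zero field, $K$ is itself finitely generated. Condition (2) for $V$ is inherited from $P$ via the surjection $P_{n+1} \twoheadrightarrow V_{n+1}$, and since multiplicities of irreducibles are additive in short exact sequences in characteristic zero, condition (3) for $V$ follows from (3) for both $P$ and $K$. Condition (1) for $V$ in the range $n \gg 0$ reduces to the vanishing of the torsion submodule of $V$ above some degree, a standard consequence of finite generation. Iterating the argument on $K$ terminates by a well-founded complexity measure, concluding the proof.
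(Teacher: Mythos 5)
Your reverse direction is correct and matches the paper. Your free module decomposition and the Pieri-rule analysis for condition (3) on $M(m)$ are essentially the paper's Propositions \ref{indform} and \ref{coeffstab} (your combinatorial bijection argument is left heuristic, but the idea is right). Your treatment of conditions (1) and (2) for a general finitely generated $V$ also matches the paper: condition (2) from finite generation, and condition (1) from the observation that $\bigcap_i \ker\phi^i_n$ assembles to a submodule with trivial transition maps, which is finitely generated by Noetherianity and hence supported in finitely many degrees.

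The genuine gap is in the final reduction for condition (3). You propose to write $0 \to K \to P \to V \to 0$ with $P$ free, deduce (3) for $V$ from (3) for $P$ and $K$ (fine, since multiplicities subtract), and then ``iterate on $K$, which terminates by a well-founded complexity measure.'' No such termination is established, and none is obvious: $K$ is just another finitely generated $\FI_d$-module with no smaller generating degree or any other invariant that visibly decreases, and $\FI_d$-modules do not have finite projective dimension in general, so the iteration need not stop. This is precisely the obstruction the paper circumvents. Instead of iterating resolutions, the paper invokes the Sam--Snowden theorem (Theorem \ref{ktheory}) that the Grothendieck group of finitely generated $\FI_d$-modules is generated by classes of free modules together with classes of modules whose Hilbert function is $o(d^n)$. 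Since multiplicities of irreducibles are linear functionals on the Grothendieck group, condition (3) reduces to two cases: free modules, which you handle, and $o(d^n)$ modules, which you do not address at all. For the latter case the paper proves the stronger statement that $c_{\lambda,n_1+l,\ldots,n_d+l}=0$ for $l \gg 0$, via a hook-length-formula estimate showing that $\dim_k S(\lambda)_{n_1+l,\ldots,n_d+l}$ grows like $d^{dl}/\mathrm{poly}(l)$, which would contradict $o(d^n)$ growth if the multiplicity were nonzero infinitely often. Without the $K$-theory input (or some replacement for your unspecified well-founded measure), and without the $o(d^n)$ dimension estimate, your argument does not close.
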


Perhaps the most intriguing condition in the above theorem is \ref{c3}. It can be thought of as saying that the irreducible representations which appear as summands of the constituent modules of $V$, corresponding to partitions with at least $d$-rows, eventually appear with predictable multiplicity. If $d > 1$, then the above theorem does not say anything about multiplicities of irreducible representations corresponding to smaller partitions. For example, if $\lambda$ is a partition, and $n_1 \geq \ldots \geq n_r$ are positive integers with $r \leq d$, it is natural to ask how the quantity $c_{\lambda,n_1 + l,\ldots, n_r + l}$ depends on $l$. While we do not answer this question in general, we do find an answer in one notable case.\\

\begin{thmab}\label{polystab}
Let $V$ be a finitely generated $\FI_d$-module, let $\lambda$ be a partition of some integer $m$, and let $c_{\lambda,n}$ denote the multiplicity of $S(\lambda)_n$ in $V_n$. Then there exists a polynomial $p(x) \in \Q[x]$ of degree $\leq d-1$ such that for all $n \gg 0$, $c_{\lambda,n} = p(n)$.\\
\end{thmab}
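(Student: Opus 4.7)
The plan is to handle the representable $\FI_d$-modules $M(m):=k[\Hom_{\FI_d}([m],-)]$ by a direct computation, and then bootstrap to arbitrary finitely generated $V$ via a resolution. Since $c_{\lambda,n}(-)$ is additive on short exact sequences, a finite projective resolution $P_\bullet \to V$ by finite direct sums of representables yields $c_{\lambda,n}(V)=\sum_i(-1)^i c_{\lambda,n}(P_i)$, transferring any polynomial degree bound. To decompose $M(m)_n$ as an $\Sn_n$-representation, I view an $\FI_d$-morphism $[m]\to[n]$ as an injection $f:[m]\hookrightarrow[n]$ together with a coloring $c:[n]\setminus f([m])\to[d]$, and partition by the color-class sizes $(a_1,\ldots,a_d)$ with $\sum a_i=n-m$. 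Computing stabilizers gives
\[
M(m)_n \;\cong\; \bigoplus_{\sum a_i = n-m} \Ind_{\Sn_{a_1}\times\cdots\times\Sn_{a_d}}^{\Sn_n}(\mathrm{triv}),
\]
and inducing through $\Sn_m\times\Sn_{n-m}$ then decomposing permutation modules via Kostka numbers yields
\[
M(m)_n \;\cong\; \bigoplus_{\pi\vdash m,\ \nu\vdash n-m,\ \ell(\nu)\leq d} \dim(S^\pi)\,s_\nu(1^d)\,\Ind_{\Sn_m\times\Sn_{n-m}}^{\Sn_n}(S^\pi\boxtimes S^\nu),
\]
where $s_\nu(1^d)$ counts semistandard tableaux of shape $\nu$ with entries in $[d]$.

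Applying the Littlewood--Richardson rule to extract the $S^{\lambda[n]}$-isotypic component gives $c_{\lambda,n}(M(m)) = \sum_{\pi,\nu} \dim(S^\pi)\,s_\nu(1^d)\,c^{\lambda[n]}_{\pi,\nu}$. For $n\gg 0$, the nonzero terms are parametrized by $\pi\vdash m$ and partitions $\mu\subseteq\lambda$ of length at most $d-1$, via $\nu=\mu[n-m]$; the Littlewood--Richardson coefficient $c^{\lambda[n]}_{\pi,\mu[n-m]}$ stabilizes in $n$ because the skew shape $\lambda[n]/\mu[n-m]$ stabilizes up to horizontal translation of its top row. The remaining $n$-dependence lives in $s_{\mu[n-m]}(1^d)$, and by the Weyl dimension formula
\[
s_\nu(1^d) \;=\; \prod_{1\leq i<j\leq d}\frac{\nu_i-\nu_j+j-i}{j-i},
\]
substituting $\nu_1 = n-m-|\mu|$ and $\nu_i=\mu_{i-1}$ for $i\geq 2$ shows that exactly the $d-1$ factors with $i=1$ are linear in $n$. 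Hence $s_{\mu[n-m]}(1^d)$ is a polynomial in $n$ of degree $d-1$, and summing over the finite index set gives $c_{\lambda,n}(M(m))$ polynomial of degree $\leq d-1$.

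The main obstacle I anticipate is the bootstrap from representables to arbitrary finitely generated $V$: one needs a finite-length resolution of $V$ by finite direct sums of representables in order to apply the additivity argument. If $\FI_d$-modules have finite global dimension this is automatic; otherwise I would induct on a presentation-degree invariant using the short exact sequence $0\to K\to P\to V\to 0$ with $P$ a finite sum of representables and $K$ finitely generated (by Noetherianity), combined with a Castelnuovo--Mumford-style regularity bound to ensure the induction terminates in the range $n\gg 0$.
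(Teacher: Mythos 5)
Your direct computation for the free modules $M(m)$ is correct and more explicit than anything in the paper: the orbit decomposition of $\Hom_{\FI_d}([m],[n])$ by color-class sizes does give $M(m)_n \cong \bigoplus_{a} \Ind_{\Sn_a}^{\Sn_n}k$, the Kostka/Littlewood--Richardson analysis then expresses $c_{\lambda,n}(M(m))$ as a finite sum over $\pi\vdash m$ and $\mu\subseteq\lambda$ with $\ell(\mu)\le d-1$ of stabilizing LR coefficients times $s_{\mu[n-m]}(1^d)$, and the Weyl dimension formula shows the latter is a polynomial in $n$ of degree exactly $d-1$. This is a genuine and potentially useful alternative treatment of the free case, which the paper instead handles implicitly through a coinvariants functor.

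The gap is the bootstrap. Finitely generated $\FI_d$-modules do \emph{not} have finite projective dimension: already for $\FI=\FI_1$, a nonzero module supported in a single degree (all transition maps zero) is finitely generated but its minimal free resolution is infinite, and the same construction works for any $d$. So there is no finite resolution $P_\bullet\to V$ by sums of representables, and the alternating-sum identity $c_{\lambda,n}(V)=\sum_i(-1)^i c_{\lambda,n}(P_i)$ does not apply. Your fallback---inducting on a presentation with a Castelnuovo--Mumford-type regularity bound---is not developed enough to close the gap: even granting such a bound, the resolution remains infinite, the number of syzygy terms contributing at level $n$ grows with $n$, and it is not clear why the degree bound $\le d-1$ would survive. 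One could try to replace the finite resolution by the Grothendieck-group statement (Theorem~\ref{ktheory}) that the paper uses for Theorem~\ref{genrepstab}: $[V]$ is a $\Z$-linear combination of classes of free modules and of modules with $o(d^n)$ Hilbert function, and $c_{\lambda,n}$ is additive on the Grothendieck group. Your computation then handles the free summands, but the $o(d^n)$ summands still require an independent argument that their trivial- (or $S(\lambda)_n$-) multiplicities are eventually polynomial of degree $\le d-1$, and that argument is missing.

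The paper proves Theorem~\ref{polystab} by an entirely different mechanism that sidesteps resolutions altogether. It introduces the coinvariants functor $\Phi:\FI_d\Mod\to \text{Mod}_R^{gr}$ with $R=k[x_1,\ldots,x_d]$, shows $\Phi$ is exact and carries $M(m)$ to $R(-m)$, so that $\Phi(V)$ is a finitely generated graded $R$-module whose Hilbert function (equal to the multiplicity of the trivial representation in $V_n$) is eventually a polynomial of degree $\le d-1$ by commutative algebra (Theorem~\ref{trivmult}). For a general $\lambda$ it then twists: form the $(\FI_d\times\FI)$-module $V\boxtimes S(\lambda)$ with $S(\lambda)$ the CEF $\FI$-module, pull back along the diagonal-type functor $\Psi:\FI_d\to\FI_d\times\FI$, and verify via Sam--Snowden's property (\textbf{F}) that $\Psi^\ast(V\boxtimes S(\lambda))$ remains finitely generated; its trivial-isotypic multiplicity in degree $n$ is $\langle k, V_n\otimes S(\lambda)_n\rangle = c_{\lambda,n}$, and Theorem~\ref{trivmult} finishes. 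Your free-module calculation is compatible with and illuminates this (it recovers $\Phi(M(m))\cong R(-m)$ on the level of trivial multiplicities), but the reduction from general $V$ to a computable case is where your proposal is incomplete and the paper's argument supplies the missing idea.
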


Note that if $d \geq 2$ then one can construct examples where the polynomial of the above theorem is non-constant. This is already a departure from the case of $\FI$-modules, as one observes from the third condition in the theorem of Church, Ellenberg, and Farb.\\

\section*{Acknowledgments}
The author would like to give thanks to Jordan Ellenberg and Rohit Nagpal for many useful conversations during the writing of this paper. The author would also like to give special thanks to Steven Sam and Andrew Snowden for informing him of their result, Theorem \ref{ktheory}. Very special thanks should also be given to Steven Sam, who first observed most of the ideas used in Section \ref{thmb}. Finally, the author would like to acknowledge the generous support of the National Science Foundation, through NSF grant DMS-1502553.\\

\section{Basic Definitions and Notation}

\subsection{The Representation Theory of the Symmetric Groups}

We begin with a brief review of the complex representation theory of the symmetric groups. For the remainder of this section, we assume that $k$ is a field of characteristic 0. A reference for much of the material which appears in this section is \cite{CST}. It is a well known fact that the complex irreducible representations of $\mathfrak{S}_n$ can be defined over $\Q$. For this reason, all of what follows can be done over the field $k$.\\

\begin{definition}
For any positive integer $n$, a \textbf{partition} of $n$, denoted $\lambda \vdash n$, is a tuple $\lambda = (\lambda_1,\ldots,\lambda_h)$ of positive integers such that $\lambda_i \geq \lambda_{i+1}$ for each $i$ and $\sum_i \lambda_i = n$. Similarly, a \textbf{composition} of $n$ is a tuple $(a_1,\ldots,a_h)$ of non-negative integers such that $\sum_i a_i = n$. Observe that the only differences between these two concepts is that partitions are ordered, and compositions allow for zero entries. If $\lambda = (\lambda_1,\ldots,\lambda_h)$ is a partition, or a composition, of $n$ then we use $l(\lambda) = h$ to denote its \textbf{length}. We will also use $|\lambda| = n$ to denote the \textbf{size} of $\lambda$.\\
\end{definition}

\begin{theorem}
There is a one to one correspondence between partitions of $n$ and irreducible representations of the symmetric group $\Sn_n$.\\
\end{theorem}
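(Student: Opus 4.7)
The plan is to exhibit an explicit family of irreducibles indexed by partitions and then argue by counting that this family is exhaustive. Concretely, for each $\lambda \vdash n$ I would construct the Specht module $S^\lambda$ in the standard way. Let $M^\lambda$ denote the permutation $k[\Sn_n]$-module on the set of Young $\lambda$-tabloids, i.e.\ equivalence classes $\{t\}$ of Young tableaux of shape $\lambda$ under row equivalence. For each tableau $t$ with column stabilizer $C_t \leq \Sn_n$, form the polytabloid
\[
e_t := \sum_{\sigma \in C_t} \mathrm{sgn}(\sigma)\, \sigma \cdot \{t\} \;\in\; M^\lambda,
\]
and define $S^\lambda \subseteq M^\lambda$ to be the $k[\Sn_n]$-submodule spanned by all polytabloids.

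Next I would prove two claims: (i) each $S^\lambda$ is irreducible, and (ii) for distinct partitions $\lambda$ and $\mu$ the modules $S^\lambda$ and $S^\mu$ are non-isomorphic. The key technical input for (i) is the sign lemma: if $H \leq \Sn_n$ contains a transposition and $v \in M^\lambda$ is fixed by that transposition, then $\bigl(\sum_{h \in H} \mathrm{sgn}(h)\, h\bigr) \cdot v = 0$. Together with a dominance-order comparison, this yields the submodule theorem: any $k[\Sn_n]$-submodule of $M^\lambda$ either contains $S^\lambda$ or lies inside the radical of the canonical bilinear form on $M^\lambda$. Since $\mathrm{char}(k) = 0$, this form is non-degenerate on $S^\lambda$, so $S^\lambda$ has no proper non-zero submodule. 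For (ii), the same circle of ideas shows that $\Hom_{k[\Sn_n]}(S^\lambda, M^\mu) \neq 0$ forces $\lambda \trianglerighteq \mu$ in the dominance order, which, combined with the fact that $S^\lambda$ embeds in $M^\lambda$, pins down $\lambda$ from the isomorphism class of $S^\lambda$.

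Finally, to show that the correspondence $\lambda \mapsto S^\lambda$ accounts for \emph{every} irreducible, I would invoke the standard Wedderburn count. Since $\mathrm{char}(k) = 0$, Maschke's theorem implies $k[\Sn_n]$ is semisimple, and Artin--Wedderburn then gives that the number of isomorphism classes of simple $k[\Sn_n]$-modules equals the number of conjugacy classes of $\Sn_n$. Cycle-type puts these conjugacy classes in bijection with partitions of $n$, and combining this count with (i) and (ii) forces $\lambda \mapsto S^\lambda$ to be the desired bijection.

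The main obstacle in this outline is the submodule theorem underlying step (i); the sign lemma and the dominance-order manipulations are the only genuinely delicate inputs. Everything else is either the direct construction of polytabloids or the formal semisimple counting argument, and the reference \cite{CST} cited by the author supplies all of these details.
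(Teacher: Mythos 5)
The paper does not prove this statement; it is quoted as classical background and referred to \cite{CST}. Your outline via Specht modules, the submodule theorem, and Artin--Wedderburn counting against conjugacy classes is the standard textbook argument and is correct.
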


Given a partition $\lambda \vdash n$, we will use $S^\lambda$ to denote the irreducible representation of $\Sn_n$ associated to $\lambda$. By convention, the partition $(n)$ will correspond to the trivial representation of $\Sn_n$.\\

The correspondence between irreducible representations of $\Sn_n$ and partitions of $n$ implies many strong connections between the combinatorics of a partition, and the algebra of the associated irreducible representation. Many of these connections are stated in terms of Young tableau.\\

\begin{definition}
Given a partition $\lambda = (\lambda_1,\ldots,\lambda_h) \vdash n$, we visualize $\lambda$ as a left justified diagram comprised of rows of boxes of equal size, such that row $i$ has precisely $\lambda_i$ boxes. Such a diagram is known as the \textbf{Young tableaux} associated to $\lambda$. The box in position $(i,j)$ is defined to be that which is $i-1$-rows down, and $j-1$-columns to the right, of the box in the top left position. If $\lambda$ is a partition whose associated tableaux has a box in position $(i,j)$, then we write $(i,j) \in \lambda$.\\

A \textbf{filling} of the Young tableaux associated to $\lambda$ is a bijection between the boxes of the tableaux and the set $[n] = \{1,\ldots, n\}$. Any filling for which the numbers are increasing down every column and row is called \textbf{standard}.\\
\end{definition}

\begin{theorem}
For a partition $\lambda \vdash n$, write $\text{Tab}(\lambda)$ to denote the set of standard fillings of $\lambda$. Then,
\[
\dim_k S^\lambda = |\Tab(\lambda)| 
\]
\textbf{}\\
\end{theorem}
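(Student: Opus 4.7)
The plan is to prove the equality $\dim_k S^\lambda = |\Tab(\lambda)|$ by induction on $n = |\lambda|$, using the branching rule for symmetric groups as the main tool. The base case $n=1$ is trivial: $\Sn_1$ has a unique irreducible representation of dimension $1$, and the unique partition $(1)$ admits a unique standard filling.

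For the inductive step, I would invoke (or first establish) the branching rule: for any $\lambda \vdash n$,
\[
\Res^{\Sn_n}_{\Sn_{n-1}} S^\lambda \cong \bigoplus_{\mu} S^\mu,
\]
where $\mu$ ranges over all partitions of $n-1$ obtained by removing a \emph{corner box} of $\lambda$ (a box $(i,j) \in \lambda$ such that neither $(i+1,j)$ nor $(i,j+1)$ lies in $\lambda$). Taking dimensions and applying the inductive hypothesis yields
\[
\dim_k S^\lambda = \sum_{\mu} \dim_k S^\mu = \sum_\mu |\Tab(\mu)|.
\]
On the combinatorial side, any standard filling of $\lambda$ must place the largest entry $n$ in a corner box, since the entries are strictly increasing along rows and columns. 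Erasing that box gives a standard filling of the smaller shape $\mu$, and this operation is clearly a bijection
\[
\Tab(\lambda) \;\longleftrightarrow\; \bigsqcup_{\mu} \Tab(\mu),
\]
where again $\mu$ runs over the partitions of $n-1$ obtained by removing a corner of $\lambda$. Combining these two displays completes the induction.

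The genuine obstacle here is the branching rule itself, which is not a purely formal statement. If one wishes to avoid invoking it as a black box, the natural alternative is the Specht module construction: define $S^\lambda$ as the span of polytabloids $e_T$ inside the permutation module on tabloids of shape $\lambda$, show that the polytabloids $e_T$ indexed by standard $T \in \Tab(\lambda)$ span $S^\lambda$ via the Garnir straightening relations, and prove linear independence by pairing against tabloids in a suitable dominance order. Either route yields the desired formula; given the tableau-heavy combinatorial flavor of the rest of the paper, the branching rule approach is the more economical sketch, with the Specht module construction held in reserve to justify the branching rule if needed.
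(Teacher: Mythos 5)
The paper gives no proof of this theorem: it is stated as standard background, with [CST] cited as the reference for the section, so there is no argument of the paper's own to compare against.

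Your branching-rule induction is a correct and standard proof. The base case is right; the branching rule $\Res^{\Sn_n}_{\Sn_{n-1}} S^\lambda \cong \bigoplus_\mu S^\mu$, where $\mu$ runs over partitions of $n-1$ obtained by deleting a corner box, is the classical statement; and the map $\Tab(\lambda) \to \bigsqcup_\mu \Tab(\mu)$ given by erasing the box containing $n$ is exactly the combinatorial counterpart, since the maximal entry of a standard filling must sit in a removable corner. Taking dimensions and invoking the inductive hypothesis closes the induction. You also correctly identify the one nontrivial input: the branching rule is not formal, and a self-contained treatment would discharge it via the Specht module construction, with standard polytabloids spanning by Garnir straightening and linearly independent via a leading-tabloid argument in dominance order. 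Either route is a legitimate proof; the paper simply takes the result as known.
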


One approach to computing $|\Tab(\lambda)|$ is through the hook length formula.\\

\begin{definition}
Let $\lambda$ be a partition, and assume that $(i,j) \in \lambda$. Then we define the \textbf{hook} at $(i,j)$ to be the sub-diagram consisting of this box, as well as all boxes $(k,l) \in \lambda$ such that either $i = k$ and $l \geq j$ or $l = j$ and $l \geq i$. We define the \textbf{length} of this hook, denoted $H(i,j)$, to be the total number of boxes it contains.\\
\end{definition}

\begin{theorem}[The Hook Length Formula]
Let $\lambda \vdash n$. Then the dimension of the irreducible representation $S^\lambda$ is given by the formula
\[
\dim_kS^\lambda = \frac{n!}{\prod_{(i,j) \in \lambda} H(i,j)}.
\]
\text{}\\
\end{theorem}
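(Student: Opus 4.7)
The plan is to combine the previous theorem, which gives $\dim_k S^\lambda = |\Tab(\lambda)|$, with a purely combinatorial identity $|\Tab(\lambda)| = n!/\prod_{(i,j) \in \lambda} H(i,j)$, which I would prove by induction on $n$. The inductive setup comes from the branching rule for standard tableaux: in any standard filling of $\lambda$, the box containing the largest entry $n$ must be a corner, and removing it produces a standard filling of the smaller partition. This yields the recurrence
\[
|\Tab(\lambda)| = \sum_{c} |\Tab(\lambda \setminus c)|,
\]
where $c$ ranges over the corner cells of $\lambda$.

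Writing $g^\lambda := n!/\prod_{(i,j) \in \lambda} H(i,j)$, the induction reduces to showing that $g^\lambda$ satisfies the same recurrence; after dividing by $g^\lambda$, this is equivalent to the identity
\[
\sum_{c = (a,b)\text{ corner}} \frac{g^{\lambda\setminus c}}{g^\lambda} = 1.
\]
For each corner $c = (a,b)$, only those hook lengths whose arms or legs pass through row $a$ or column $b$ change upon removing $c$, so each ratio $g^{\lambda\setminus c}/g^\lambda$ is an explicit product of simple factors indexed by the $a$th row and $b$th column of $\lambda$.

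To establish the identity, I would adopt the Greene-Nijenhuis-Wilf probabilistic argument. Consider the random walk on the boxes of $\lambda$ in which the walker starts at a uniformly random cell, and at each subsequent step, if it is at $(i,j)$, moves uniformly at random to some other cell in the hook at $(i,j)$; the walk terminates upon reaching a corner. A direct inclusion-exclusion computation shows that the probability of terminating at the corner $(a,b)$ equals precisely $g^{\lambda \setminus (a,b)}/g^\lambda$. Since the walk must terminate at some corner with probability $1$, the sum equals $1$, completing the induction.

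The main obstacle is the hook-walk probability calculation, which requires isolating the contributions of the row $a$ and column $b$ to the transition probabilities and recognizing the resulting expression as the telescoping product matching the hook-length ratio; apart from that bookkeeping, the argument is essentially mechanical. One could alternatively invoke the original proof of Frame, Robinson, and Thrall using the Frobenius character formula together with a Vandermonde determinant manipulation, or the bijective proof of Novelli-Pak-Stoyanovskii, but the probabilistic route is the most transparent and self-contained.
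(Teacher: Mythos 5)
The paper does not prove the Hook Length Formula; it is stated as classical background (the section cites \cite{CST} as a reference), so there is no internal proof to compare your argument against. That said, your sketch is a correct outline of the Greene--Nijenhuis--Wilf hook-walk proof: reducing to the corner-removal recurrence $|\Tab(\lambda)| = \sum_c |\Tab(\lambda\setminus c)|$ via the branching rule, then establishing $\sum_c g^{\lambda\setminus c}/g^\lambda = 1$ by identifying each ratio as the termination probability of the hook walk at the corresponding corner. The one place you correctly flag as the real work --- computing the termination probability and seeing the telescoping product --- is genuinely the crux, and you would need to carry out that inclusion-exclusion carefully, but the overall strategy is standard and sound. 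Given the paper's treatment, simply citing \cite{CST} (or Frame--Robinson--Thrall, or GNW) would have been equally acceptable; your choice to sketch a self-contained proof is more than the paper itself does.
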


\begin{definition}
Two partitions $\lambda = (\lambda_1,\ldots,\lambda_h) \vdash n$ and $\mu = (\mu_1,\ldots,\mu_l) \vdash m$ are related $\mu \leq \lambda$ if and only if $l \leq h$ and $\mu_i \leq \lambda_i$ for all $i$. This is equivalent to requiring that the tableaux of $\mu$ fit inside the tableaux of $\lambda$.\\
\end{definition}

For the purposes of this work, we will need a version of Pieri's rule which is slightly more general than that which is usually encountered. If $a = (a_1,\ldots,a_h)$ is a composition of $n$ we will use $\Sn_a$ to denote the subgroup $\mathfrak{S}_{a_1} \times \Sn_{a_2} \times \ldots \times \Sn_{a_h} \leq \Sn_n$. The following theorem follows from the usual Pieri's rule, as well as a simple induction argument.\\

\begin{theorem}[Pieri's rule]
Let $\mu = (\mu_1,\ldots,\mu_l) \vdash m$, and let $a = (a_1,\ldots,a_h)$ be a composition of $n-m$. Then
\[
\Ind_{\mathfrak{S}_m \times \mathfrak{S}_{a}}^{\mathfrak{S}_n} S^\mu \boxtimes k = \bigoplus S^\lambda
\]
where the sum is over chains of the form
\[
\mu = \mu^{(0)} \leq \mu^{(1)} \leq \ldots \leq \mu^{(h-1)} \leq \mu^{(h)} = \lambda
\]
such that $\mu^{(i)}$ is obtained from $\mu^{(i-1)}$ by distributing $a_i$ boxes to distinct columns.\\
\end{theorem}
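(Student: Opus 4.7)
The plan is to prove the generalized Pieri's rule by induction on $h$, the length of the composition $a$, using the standard Pieri's rule as the base case and transitivity of induction in the inductive step.

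For $h=1$, the composition is just $a = (n-m)$, so $\Sn_a = \Sn_{n-m}$, and the statement reduces directly to the usual Pieri's rule: inducing $S^\mu \boxtimes k$ from $\Sn_m \times \Sn_{n-m}$ up to $\Sn_n$ yields $\bigoplus S^\lambda$ where $\lambda$ runs over partitions obtained from $\mu$ by adding a horizontal strip of size $n-m$, i.e., distributing $n-m$ boxes to distinct columns.

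For the inductive step, assume the result holds for compositions of length $h-1$. Given $a=(a_1,\ldots,a_h)$, let $N = m + a_1 + \cdots + a_{h-1}$, so that $\Sn_m \times \Sn_{a_1} \times \cdots \times \Sn_{a_{h-1}} \leq \Sn_N$, and $\Sn_N \times \Sn_{a_h} \leq \Sn_n$. By transitivity of induction,
\[
\Ind_{\Sn_m \times \Sn_a}^{\Sn_n}(S^\mu \boxtimes k) \;\cong\; \Ind_{\Sn_N \times \Sn_{a_h}}^{\Sn_n}\left(\Ind_{\Sn_m \times \Sn_{a_1} \times \cdots \times \Sn_{a_{h-1}}}^{\Sn_N}(S^\mu \boxtimes k \boxtimes \cdots \boxtimes k) \boxtimes k \right).
\]
By the inductive hypothesis applied to the composition $(a_1,\ldots,a_{h-1})$ of $N-m$, the inner induction decomposes as $\bigoplus_{\mu^{(h-1)}} S^{\mu^{(h-1)}}$, where the sum is over chains $\mu = \mu^{(0)} \leq \mu^{(1)} \leq \cdots \leq \mu^{(h-1)}$ with $\mu^{(i)}$ obtained from $\mu^{(i-1)}$ by distributing $a_i$ boxes to distinct columns. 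Substituting this and using that induction commutes with direct sums gives
\[
\bigoplus_{\mu^{(h-1)}} \Ind_{\Sn_N \times \Sn_{a_h}}^{\Sn_n}(S^{\mu^{(h-1)}} \boxtimes k),
\]
and one final application of the usual Pieri's rule to each summand produces $\bigoplus S^\lambda$ where $\lambda = \mu^{(h)}$ is obtained from $\mu^{(h-1)}$ by distributing $a_h$ boxes to distinct columns. Gathering these chains yields exactly the sum described in the statement.

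There is essentially no main obstacle: the only point worth checking is that the $k$-factors ($\boxtimes k$) on the inner subgroup are carried through the transitivity correctly, which follows from the fact that the trivial representation restricts to the trivial representation on any subgroup, so $k_{\Sn_{a_1}\times\cdots\times\Sn_{a_{h-1}}}$ is exactly the restriction of $k_{\Sn_{N-m}}$ via the inclusion $\Sn_{a_1} \times \cdots \times \Sn_{a_{h-1}} \hookrightarrow \Sn_{N-m}$. This ensures the iterated inductions of $S^\mu \boxtimes k \boxtimes \cdots \boxtimes k$ match the inductive hypothesis cleanly, and the bookkeeping of chains at each step is immediate.
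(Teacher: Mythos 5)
Your proof is correct and fills in exactly what the paper leaves to the reader: the paper merely remarks that this generalized Pieri's rule "follows from the usual Pieri's rule, as well as a simple induction argument," and your induction on the length $h$ of the composition, using transitivity of induction at each step, is precisely that argument made explicit.
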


\begin{remark}
Note that it is not always the case that the right hand side of Pieri's rule is multiplicity free. Indeed, the multiplicity of $S^\lambda$ in $\Ind_{\mathfrak{S}_m \times \mathfrak{S}_{a}}^{\mathfrak{S}_n} S^\mu \boxtimes k$ is precisely the number of chains between $\mu$ and $\lambda$ satisfying the conditions stated in the theorem.\\
\end{remark}

\subsection{$\FI_d$-Modules}

Fix a positive integer $d$ for the remainder of this paper. 

\begin{definition}\label{catfid}
The category $\FI_d$ is defined as follows. Objects of $\FI_d$ are finite sets, while morphisms in $\FI_d$ are pairs $(f,g)$ where $f:S \hookrightarrow T$ is an injection, and $g$ is a $d$-coloring of the compliment of the image of $f$, i.e. a map from the compliment of the image of $f$ to the set $[d] := \{1,\ldots, d\}$.\\

If $(f,g)$ and $(f',g')$ are two composable morphisms, then we set
\[
(f,g) \circ (f',g') = (f \circ f',h),
\]
where 
\[
h(x) = \begin{cases} g(x) &\text{ if $x \notin \text{im}f$}\\ g'(f^{-1}(x)) &\text{ otherwise.} \end{cases}
\]
\end{definition}

We observe that $\FI_1$ is equivalent to the category $\FI$ of finite sets and injections. We also observe that $\FI_d$ has a fully faithful subcategory, whose objects are the sets $[n] = \{1,\ldots,n\}$. We also refer to this subcategory as being $\FI_d$.\\

\begin{definition}
Given a commutative ring $k$, an \textbf{$\FI_d$-module over $k$} is a covariant functor $V:\FI_d \rightarrow \text{Mod}_k$. Given an $\FI_d$-module $V$, we will use $V_n$ to denote $V([n])$.\\

If $(f,g):[n] \rightarrow [m]$ is a morphism in $\FI_d$, we write $(f,g)_\as$ to denote the map $V(f,g)$. These maps are known as the \textbf{induced maps} of $V$. In the case where $n < m$, we refer to $(f,g)_\as$ as a \textbf{transition map} of $V$.\\
\end{definition}

Observe that the action of the $\FI_d$-endomorphisms of $[n]$, for any $n$, make $V_n$ into an $\Sn_n$-representation over $k$. In this way, we may think of $\FI_d$-modules as being sequences of $\Sn_n$-representations, with $n$ increasing, which are compatible with one another under the actions of the transition maps.\\

We will write $\FI_d\Mod$ to denote the category of $\FI_d$-modules with natural transformations. Because the objects of $\FI_d\Mod$ are valued in an abelian category, it follows that $\FI_d\Mod$ is itself an abelian category.

\begin{definition}
Let $m$ be a non-negative integer. The \textbf{free $\FI_d$-module generated in degree $m$}, is defined on objects by
\[
M(m)_n := k[\Hom_{\FI_d}([m],[n])],
\]
the free $k$-module with basis indexed by the set $\Hom_{\FI_d}([m],[n])$. For each morphism $(f,g)$, the induced map $(f,g)_\as$ is defined on basis vectors by composition.\\

More generally, let $W$ be a $k[\mathfrak{S}_m]$-module. Then we define the \textbf{free $\FI_d$-module relative to $W$}, $M(W)$, as follows. For each positive integer $n$ we set 
\[
M(W)_n := M(m)_n \otimes_{k[\Sn_m]} W.
\]
The induced maps are one again defined by composition on the first coordinate. Direct sums of free $\FI_d$-modules of either type will also be referred to as being \textbf{free}.\\
\end{definition}

Free modules are vitally important to the theory of $\FI_d$-modules. In fact, their analogs appear to be fundamental objects in the representation theory of many other combinatorial categories. See \cite{R} and \cite{LY} for examples of this. One should note that there is a canonical isomorphism $M(k[\Sn_m]) \cong M(m)$.\\

\begin{definition}\label{fg}
An $\FI_d$-module $V$ is said to be \textbf{generated in degree $\leq m$} if there is a list of integers $\{m_i\}_{i \in I}$, with $m_i \leq m$ for all $i \in I$, and an exact sequence of $\FI_d$-modules
\begin{eqnarray}
0 \rightarrow K \rightarrow \bigoplus_{i \in I}M(m_i)^{n_i} \rightarrow V \rightarrow 0 \label{presentation}
\end{eqnarray}
If the indexing set $I$ can be taken to be finite, then we say that $V$ is \textbf{finitely generated}.\\
\end{definition}

\begin{remark}
Saying $V$ is generated in degree $\leq m$ is equivalent to there existing a set $\{v_i\}_{i \in I} \subseteq \sqcup_{n = 0}^m V_n$ such that no proper submodule of $V$ contains every element of $\{v_i\}_{i \in I}$. This follows from the following important adjunction, for any $k[\Sn_m]$-module $W$,
\[
\Hom_{\FI\Mod}(M(W),V) = \Hom_{\Sn_m}(W,V_m).
\]
In particular, constructing the surjection in (\ref{presentation}) is equivalent to choosing the set $\{v_i\}_{i \in I}$.\\
\end{remark}

We write $\FI_d\fMod$ to denote the category of finitely generated $\FI_d$-modules. One of the main theorems about finitely generated $\FI_d$-modules is the following.\\

\begin{theorem}[\cite{SS}, Theorems 7.1.2 and 7.1.5]\label{noeth}
If $V$ is a finitely generated $\FI_d$-module over a Noetherian ring $k$, then all submodules of $V$ are also finitely generated. Moreover, if $k$ is a field, then there exists polynomials $p_1^V,\ldots,p_d^V \in \Q[x]$ such that
\[
\dim_k(V_n) = p_1^V(n) + p_2^V(n)2^n + \ldots + p_d^V(n)d^n
\]
for all $n\gg 0$.
\end{theorem}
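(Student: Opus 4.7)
The plan is to treat the Noetherian claim and the Hilbert series statement separately.

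For Noetherianity, the natural framework is Sam--Snowden's theory of (quasi-)Gröbner categories. The first step is a standard reduction: since every finitely generated $\FI_d$-module is a quotient of a finite direct sum of free modules $M(m_i)$, it suffices to show that each $M(m)$ is Noetherian. To this end, I would encode a morphism $(f,g) \in \Hom_{\FI_d}([m],[n])$ as a word of length $n$ in the alphabet $[m] \sqcup [d]$: label position $j \in [n]$ by $f^{-1}(j)$ if $j \in \mathrm{im}(f)$, and by $g(j)$ otherwise. This identifies $\Hom_{\FI_d}([m],[n])$ with the set of length-$n$ words in which each letter of $[m]$ appears exactly once while letters of $[d]$ appear freely. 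A suitably chosen combination of the subword (Higman) order and lexicographic tiebreaking gives a well-partial-order compatible with composition in $\FI_d$, and verifying the axioms of a Gröbner category in this setup yields Noetherianity of $M(m)$, and hence of every finitely generated $\FI_d$-module.

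For the Hilbert series statement, I would begin by computing
\[
\dim_k M(m)_n \;=\; |\Hom_{\FI_d}([m],[n])| \;=\; \frac{n!}{(n-m)!}\, d^{n-m},
\]
which for $n \geq m$ is a polynomial in $n$ of degree $m$ times $d^n$. Thus free modules alone produce dimension functions of the shape $p(n)\, d^n$; a general finitely generated $V$ can only acquire lower exponentials $i^n$ (with $i<d$) from its submodule structure. A clean way to produce these is by induction on $d$: the inclusion $\FI_{d-1} \hookrightarrow \FI_d$ (obtained by forbidding the color $d$) gives rise to a restriction functor and its left adjoint, and one can construct a canonical ``color filtration'' on $V$ whose subquotients are induced from finitely generated $\FI_{d-1}$-modules. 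Applying the inductive hypothesis to these subquotients, together with the explicit dimension formula for induction along $\FI_{d-1} \hookrightarrow \FI_d$, expresses $\dim_k V_n$ as a $\Q$-linear combination of terms $p_i(n)\, i^n$ for $i = 1, \ldots, d$. The base case $d=1$ recovers the Church--Ellenberg--Farb--Nagpal Noetherianity and dimension statement for $\FI$-modules, in which only the $1^n$ exponential survives.

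The main obstacle I expect is in part one: one must carefully verify that the chosen word ordering really satisfies the axioms of Sam--Snowden's Gröbner framework, which requires a nontrivial compatibility check between composition in $\FI_d$ and the order. For part two, the subtlety is that free resolutions alone cannot produce the lower-exponential terms, so the argument must really go through a filtration or Grothendieck-group-style dévissage; checking that the ``color filtration'' has the right finiteness properties under the Noetherianity proved in part one is where the combinatorial work lies.
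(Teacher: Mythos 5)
This statement is quoted in the paper as a black-box result of Sam and Snowden (\cite{SS}, Theorems 7.1.2 and 7.1.5); the paper itself supplies no proof, so there is no in-paper argument for your proposal to be compared against. Judged against the proof in \cite{SS}: your first part is on the right track, but your second part has a genuine gap.

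For the Noetherianity claim, your reduction to free modules and the encoding of $\Hom_{\FI_d}([m],[n])$ as length-$n$ words over the alphabet $[m] \sqcup [d]$ (with each letter of $[m]$ appearing exactly once) is essentially the Sam--Snowden Gröbner-category argument, and the well-partial-order you describe is the right one. This half is sound in outline, modulo the compatibility check you yourself flag.

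For the dimension formula, the ``color filtration'' argument does not work as stated, and the arithmetic actually points the wrong way. If $W$ is a finitely generated $\FI_{d-1}$-module with $\dim W_n = p(n)(d-1)^n$ for a polynomial $p$, then the left Kan extension along $\FI_{d-1} \hookrightarrow \FI_d$ has $n$-th graded piece $\bigoplus_{m} k[\mathrm{Inj}([m],[n])] \otimes_{\Sn_m} W_m$, so its dimension is $\sum_m \binom{n}{m} p(m)(d-1)^m$, which grows like $q(n)\,d^n$ for some polynomial $q$ (e.g.\ $\sum_m \binom{n}{m}(d-1)^m = d^n$). In other words, induction from $\FI_{d-1}$ to $\FI_d$ \emph{raises} the exponential from $(d-1)^n$ to $d^n$; it does not produce the lower-exponential summands $i^n$ for $i<d$. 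Moreover you never construct the claimed filtration: the natural ``count how many times color $d$ appears'' stratification of morphisms is not exhausted by any $\FI_d$-submodule of $V$, so it is not clear what the subquotients would even be as $\FI_d$-modules, let alone that they are of the special induced form you need. The actual Sam--Snowden argument (Theorem 7.1.5 of \cite{SS}) avoids any such dévissage and instead shows directly that the Hilbert series of a finitely generated $\FI_d$-module is a rational function with denominator of the form $\prod_{i=1}^d(1-it)^{e_i}$, by exhibiting $\FI_d$ as an ``$\mathrm{O}$-lingual'' category and invoking general results on generating functions of unambiguous regular languages. To salvage your approach you would need either to produce the filtration explicitly and redo the dimension count, or to replace the filtration by a Grothendieck-group dévissage in the spirit of Theorem~\ref{ktheory}, which is closer to how such decompositions appear in practice.
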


Prior to the provided source, the Noetherian property in the above theorem was proven for $\FI_d$-modules over a field of characteristic 0 in \cite[Theorem 2.3]{S}. It was proven for $\FI$-modules over a field of characteristic 0 in \cite[Theorem 1.3]{CEF}. Later \cite[Theorem A]{CEFN} proved this result for $\FI$-modules over arbitrary Noetherian rings. The second part of the theorem, on dimensional stability, was proven in the case where $k$ is a field of characteristic 0 in \cite[Theorem 3.1]{S}. It was proven for $\FI$-modules over a field of characteristic 0 in \cite[Theorem 1.5]{CEF}, and over an arbitrary field in \cite[Theorem B]{CEFN}.\\

\begin{definition}
We call the function
\[
n \mapsto \dim_k(V_n)
\]
the \textbf{Hilbert function} of $V$. We say the Hilbert function of $V$ is $o(d^n)$ if the polynomial $p_d^V \in \Q[x]$ from Theorem \ref{noeth} is zero.\\
\end{definition}

\section{Representation Stability and $\FI_d$-modules}

In \cite{CF}, Church and Farb describe the phenomenon of representation stability. Following this, Church, Ellenberg, and Farb proved that representation stability could be equivalently stated in terms of finite generation of $\FI$-modules \cite[Theorem 1.13]{CEF}. The goal of the first half of this section of the paper is to prove Theorem \ref{genrepstab}, which suggests a kind of generalized representation stability. Following this we prove Theorem \ref{polystab}.\\

\emph{We assume throughout the remainder of the paper that $k$ is a field of characteristic 0.}\\
t
\subsection{Padded Partitions and Representations}
t
\begin{definition}\label{dpad}
Let $\lambda = (\lambda_1,\lambda_2,\ldots,\lambda_h)$ be any partition, and let $(n_1,\ldots,n_r)$ be an $r$-tuple of integers such that $n_1 \geq n_2 \geq \ldots \geq n_r \geq |\lambda| + \lambda_1$. Then we define the \textbf{$r$-padded partition}
\[
\lambda[n_1,\ldots,n_r] := (n_1-|\lambda|,n_2-|\lambda|,\ldots,n_r-|\lambda|,\lambda_1,\ldots,\lambda_h)
\]
Similarly, we define the \textbf{$r$-padded representation}
\[
S(\lambda)_{n_1,\ldots,n_r} := \begin{cases} S^{\lambda[n_1,\ldots,n_r]} &\text{ if $n_1 \geq \ldots \geq n_r \geq |\lambda|+\lambda_1$}\\ 0 &\text{ otherwise.}\end{cases}
\]
\end{definition} 

In this paper, we think of padding as a kind of parametrization. We have the following easy observation.\\

\begin{lemma}
Let $\mu =(\mu_1,\ldots,\mu_l)$ be a partition with at least $r$-rows. Then there exists a unique partition $\lambda = (\lambda_1,\ldots,\lambda_h)$, as well as a unique collection of integers $n_1 \geq n_2 \geq \ldots \geq n_r \geq |\lambda| + \lambda_1$, such that $\mu = \lambda[n_1,\ldots,n_r]$.\\
\end{lemma}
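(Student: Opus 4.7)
The plan is to read $\lambda$ and the $n_i$ directly off of $\mu$ by splitting $\mu$ at its $r$-th row, then verify that the required inequalities are automatic from $\mu$ being a partition, and finally observe that the shape of the defining formula forces this splitting. Concretely, I would set $\lambda := (\mu_{r+1}, \mu_{r+2}, \ldots, \mu_l)$ and $n_i := \mu_i + |\lambda|$ for $1 \le i \le r$. That $\lambda$ is a partition is immediate from the fact that $\mu$ is weakly decreasing. The chain $n_1 \geq n_2 \geq \ldots \geq n_r$ follows since $\mu_1 \geq \ldots \geq \mu_r$ and we are adding the same constant $|\lambda|$ to each. The boundary condition $n_r \geq |\lambda| + \lambda_1$ unwinds to $\mu_r \geq \lambda_1 = \mu_{r+1}$, which again holds because $\mu$ is weakly decreasing. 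By construction, $\lambda[n_1,\ldots,n_r] = (\mu_1,\ldots,\mu_r,\lambda_1,\ldots,\lambda_h) = \mu$.

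For uniqueness, suppose $\mu = \lambda'[n'_1,\ldots,n'_r]$ for some partition $\lambda' = (\lambda'_1,\ldots,\lambda'_{h'})$ and integers $n'_1 \geq \ldots \geq n'_r \geq |\lambda'| + \lambda'_1$. The tuple on the right has length $r + h'$, which forces $h' = l - r$. Matching entries past the $r$-th slot gives $\lambda'_j = \mu_{r+j}$ for all $j$, so $\lambda' = \lambda$; matching the first $r$ entries then gives $n'_i = \mu_i + |\lambda'| = n_i$. There is no real obstacle here beyond careful indexing: the lemma is a routine unpacking of Definition \ref{dpad}, included so that subsequent statements about multiplicities of $r$-padded representations make sense without ambiguity.
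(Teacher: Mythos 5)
Your proof is correct, and since the paper states this lemma without proof (calling it an ``easy observation''), your argument is precisely the routine verification the author intends the reader to supply. The construction $\lambda := (\mu_{r+1},\ldots,\mu_l)$, $n_i := \mu_i + |\lambda|$, the check of the inequalities from $\mu$ being weakly decreasing, and the length-counting uniqueness argument are all exactly right; the only implicit convention worth keeping in mind is that when $l = r$ the partition $\lambda$ is empty, $\lambda_1 = 0$, and the boundary inequality $n_r \geq |\lambda| + \lambda_1$ degenerates to $\mu_r \geq 0$, which your argument handles correctly in spirit.
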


As a specific instance of the above lemma, if $\mu \vdash n$, then there is a unique partition $\lambda$ such that $\mu = \lambda[n]$. This notation is convenient, as it allows us to uniformly describe the irreducible representations which appear in an $\FI_d$-module. In particular, given an $\FI_d$-module $V$, for all $n \geq 0$ we may write
\[
V_n = V^{<d}_n \oplus \left(\bigoplus_{\lambda, n_1,\ldots,n_d} c_{\lambda,n_1,\ldots,n_d}S(\lambda)_{n_1,\ldots,n_d}\right),
\]
where we implicitly only allow $\lambda,n_1 ,n_2,\ldots,n_d$ such that $|\lambda[n_1,\ldots,n_d]| = n$, and the irreducible constituents of $V^{<d}_n$ are associated to partitions with strictly less than $d$ rows. One may think of the above equation as expressing $V_n$ as the sum of a grouping of all ``small'' length partitions, and a grouping of all ``large'' partitions. Theorem \ref{genrepstab} will tell us that if $V$ is finitely generated, then the multiplicities which appear in the above decomposition are stable in the appropriate sense.\\

Note that in the next section we will prove that if $V$ is finitely generated, then the partitions $\lambda$ which appear in the above decomposition have bounded size. This is analogous to the theorem from \cite[Proposition 3.2.5]{CEF}, which states that the weight of a finitely generated $\FI$-module is finite.\\

\begin{definition}
Let $V$ be an $\FI_d$-module and decompose $V_n$ in the following way,
\[
V_n = V_n^{< d} \oplus \left( \bigoplus_{\lambda,n_1,\ldots,n_d} c_{\lambda,n_1,\ldots,n_d}S(\lambda)_{n_1,\ldots,n_d} \right),
\]
where all the irreducible constituents of $V_n^{\leq d}$ correspond to partitions with strictly less than $d$ rows. Then the \textbf{$d$-weight} $\wt^d(V)$ of $V$ is the maximum value of $|\lambda|$ across all $S(\lambda)_{n_1,\ldots,n_d}$ appearing in $V_n$ for all $n \geq 0$. If no such integer exists, then we say $\wt^d(V) =\infty$.\\
\end{definition}

\subsection{Free Modules in Characteristic 0}

We spend this section proving various lemmas and propositions about the $\FI_d$-module $M(W)$. These will prove important to us in the sections which follow. We begin with a formula for $M(W)$ which is similar to that given in \cite{CEF} for $\FI$-modules.\\

\begin{proposition}\label{indform}
Let $W$ be a $k[\mathfrak{S}_m]$-module. Then for each $n$,
\[
M(W)_n = \bigoplus_{a = (a_1,\ldots,a_d)}\Ind^{\mathfrak{S}_n}_{\mathfrak{S}_m \times \mathfrak{S}_a} W \boxtimes k,
\]
where the sum is over compositions $a$ of $n-m$ of length $d$.
\end{proposition}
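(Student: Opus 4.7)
The plan is to decompose the set $\mathrm{Hom}_{\FI_d}([m],[n])$ according to the combinatorics of the coloring, and then use the fact that the resulting pieces are transitive bisets for the relevant symmetric group actions.

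First, I partition $\mathrm{Hom}_{\FI_d}([m],[n]) = \sqcup_a X_a$, where the union runs over compositions $a = (a_1,\ldots,a_d)$ of $n-m$, and $X_a$ consists of those pairs $(f,g)$ whose coloring satisfies $|g^{-1}(i)| = a_i$ for each $i \in [d]$. Both the left $\Sn_n$-action (by post-composition) and the right $\Sn_m$-action (by pre-composition) preserve the multiset of color-class sizes, so each $X_a$ is a sub-$(\Sn_n,\Sn_m)$-biset. Consequently $k[X_a]$ is a $(k[\Sn_n], k[\Sn_m])$-bimodule and
\[
M(W)_n = \bigoplus_a k[X_a] \otimes_{k[\Sn_m]} W.
\]

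The heart of the argument is to identify each biset $X_a$ with a concrete coset space. I fix a basepoint $(f_a,g_a) \in X_a$ where $f_a$ is the standard inclusion $[m] \hookrightarrow [n]$ and $g_a$ colors $[m+1,\,m+a_1]$ with color $1$, the next $a_2$ elements with color $2$, and so on. Letting $\Sn_a := \Sn_{a_1} \times \cdots \times \Sn_{a_d}$ act on $[n]$ by permuting each color block, the stabilizer of $(f_a,g_a)$ under the left $\Sn_n$-action is exactly $\Sn_a$, giving a left $\Sn_n$-equivariant bijection $\Sn_n/\Sn_a \xrightarrow{\sim} X_a$. The quick computation to verify is that the induced map is also right $\Sn_m$-equivariant: since the embeddings $\Sn_m, \Sn_a \hookrightarrow \Sn_n$ act on the disjoint subsets $[m]$ and $[m+1,n]$, they commute, so right multiplication by $\Sn_m$ descends to $\Sn_n/\Sn_a$ and agrees with the right action on $X_a$ by pre-composition.

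Given the biset isomorphism, the final step is purely formal: commutativity of $\Sn_m$ and $\Sn_a$ inside $\Sn_n$ lets me combine tensor products as
\[
k[X_a] \otimes_{k[\Sn_m]} W \;\cong\; \bigl(k[\Sn_n] \otimes_{k[\Sn_a]} k\bigr) \otimes_{k[\Sn_m]} W \;\cong\; k[\Sn_n] \otimes_{k[\Sn_m \times \Sn_a]} (W \boxtimes k) \;=\; \Ind_{\Sn_m \times \Sn_a}^{\Sn_n} W \boxtimes k.
\]
Summing over $a$ yields the proposition. The only real subtlety to get right is the bookkeeping in the second step — checking that the map $\Sn_n \to X_a$, $\sigma \mapsto \sigma \cdot (f_a,g_a)$, is equivariant for both the left $\Sn_n$-structure and the right $\Sn_m$-structure, which hinges precisely on the observation that $\Sn_m$ acts trivially on the domain of $g_a$.
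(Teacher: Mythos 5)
Your proof takes essentially the same approach as the paper: both decompose $\Hom_{\FI_d}([m],[n])$ into pieces indexed by compositions according to color-class sizes, fix the same standard basepoint $(f_a,g_a)$, and identify each piece with the corresponding induced representation. The one small difference is at the finish: the paper constructs a surjection from $\Ind^{\Sn_n}_{\Sn_m\times\Sn_a} W\boxtimes k$ and then invokes a dimension count, whereas you establish a biset isomorphism $X_a \cong \Sn_n/\Sn_a$ and conclude by a formal tensor manipulation, which is a marginally cleaner way to close the argument but not a different route.
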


\begin{proof}
Looking at the $\Sn_n$-set $\Hom_{\FI_d}([m],[n])$, one quickly observes that the action of $\Sn_n$ does not alter the colors which appear in a given morphism. In fact, it is easily checked that the orbits are in bijection with compositions of $n-m$ of length $d$. These orbits split the $\Sn_n$-representation $M(m)_n$ into disjoint pieces. We will denote the piece associated to the composition $a:=(a_1,\ldots,a_d)$ by $M(m)_{n,a}$.\\

We may write
\[
M(W)_n = M(m)_n \otimes_{k[\mathfrak{S}_m]} W = M(m)_{n,a} \otimes_{k[\mathfrak{S}_m]} W
\]
where the sum is over the same set as in the statement of the proposition. We may consider the effect of tensoring $W$ with $M(m)_{n,a}$ over $k[\mathfrak{S}_m]$ as equating elements $(f,g)$ and $(h,g')$ whenever $f$ and $h$ have the same image and $g = g'$. It follows that a basis for $M(W)_n$ is given by pairs $w_i \otimes (f,g)$, where $\{w_i\} \subseteq W$ is a basis for $W$, and $(f,g)$ is such that $f$ is monotone.\\

Fix a composition $a$ of $n-m$, and consider the term $W \otimes_{k[\mathfrak{S}_m]} M(m)_{n,a}$. Let $(f,g):[m] \rightarrow [n]$ be the pair of the standard inclusion, i.e. the inclusion which sends $j$ to $j$, and the coloring which colors $m+1,\ldots, m+a_1$ the color 1, $m+a_1 + 1,\ldots m+a_1 + a_2$ the color 2, and so on. Then the subgroup $\Sn_m \times \Sn_a$ acts on the pure tensors $w \otimes (f,g)$ in the same way that it acts on $W \boxtimes k$. This gives us a map
\[
\Ind^{\mathfrak{S}_n}_{\mathfrak{S}_m \times \mathfrak{S}_a} W \boxtimes k \rightarrow W \otimes_{k[\mathfrak{S}_m]} M(m)_{n,a}
\]
It is easily seen that this map is surjective. On the other hand, the vector space on the left hand side has dimension
\[
\dim(W)|\mathfrak{S}_n|/|\mathfrak{S}_m \times \mathfrak{S}_a| = \dim(W) \binom{n}{m,a_1,\ldots,a_d}.
\]
This is the same as the dimension of $W \otimes_{k[\mathfrak{S}_m]} M(m)_{n,a}$.\\
\end{proof}

Observe that if $d = 1$, the above formula becomes
\[
M(W)_n = \Ind_{\mathfrak{S}_m \times \mathfrak{S}_{n-m}}^{\mathfrak{S}_n}W \boxtimes k
\]
This is the definition of $M(W)$ given in \cite{CEF} and \cite{CEFN}.\\

The above description of the free module $M(W)$ clearly lends itself to applications of Pieri's rule. Indeed, using this simple combinatorial tool, we will be able to prove many representation theoretic properties of free modules.\\

\begin{proposition}\label{mwt}
Let $\lambda \vdash m$. Then $\wt^d(M(S^\lambda)) = m$.
\end{proposition}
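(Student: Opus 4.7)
The plan is to combine Proposition \ref{indform} with Pieri's rule to get an explicit description of the irreducible constituents of $M(S^\lambda)_n$: they are precisely the $S^\mu$ for partitions $\mu$ reachable from $\lambda$ by adding $d$ horizontal strips (one per Pieri round, of sizes dictated by some composition of $n-m$ of length $d$). Any $\mu$ with at least $d$ rows writes uniquely as $\mu = \nu[n_1,\ldots,n_d]$ by the preceding lemma, and in that case $|\nu| = \sum_{j>d}\mu_j$; hence $\wt^d(M(S^\lambda))$ equals the supremum of $\sum_{j>d}\mu_j$ over such $\mu$ reachable in this way.

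For the upper bound, I would pass to conjugate partitions. Writing $\tilde{\mu}$ and $\tilde{\lambda}$ for the conjugates of $\mu$ and $\lambda$, the identity $\sum_{j>d}\mu_j = \sum_c \max(0,\tilde{\mu}_c - d)$ reduces the problem to bounding $\tilde{\mu}_c$ column-by-column. Since each of the $d$ horizontal strips deposits at most one box per column, $\tilde{\mu}_c \leq \tilde{\lambda}_c + d$, so $\max(0,\tilde{\mu}_c - d) \leq \tilde{\lambda}_c$. Summing over $c$ gives $|\nu| \leq |\lambda| = m$, hence $\wt^d(M(S^\lambda)) \leq m$.

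For the lower bound, I would construct an explicit $\mu$ attaining this bound. Take the composition $a = (\lambda_1,\ldots,\lambda_1)$ of $d\lambda_1$, and in each of the $d$ Pieri rounds add exactly one box to each of the columns $1,\ldots,\lambda_1$. Each such addition is a horizontal strip (the $\lambda_1$ added boxes lie in distinct columns), and because $\tilde{\lambda}$ is weakly decreasing the intermediate shapes remain valid partitions at each step. After $d$ rounds one has $\tilde{\mu}_c = \tilde{\lambda}_c + d$ for $c \leq \lambda_1$ and $\tilde{\mu}_c = 0$ otherwise; transposing back yields $\mu = (\underbrace{\lambda_1,\ldots,\lambda_1}_{d\,\text{times}},\lambda_1,\lambda_2,\ldots,\lambda_h) = \lambda[m+\lambda_1,\ldots,m+\lambda_1]$, so $|\nu| = |\lambda| = m$ and the bound is attained.

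The main obstacle is locating an explicit Pieri chain realising the upper bound; the column-based viewpoint above makes the correct choice transparent (each round should add one box to each of the first $\lambda_1$ columns), but without that perspective it is tempting to try constructions that either fail to be horizontal strips or fail to maximize $\sum_{j>d}\mu_j$. The upper bound itself becomes a one-line calculation once one commits to working with conjugates.
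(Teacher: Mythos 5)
Your proof is correct, and it uses the same underlying combinatorial fact as the paper's: each of the $d$ Pieri rounds adds at most one box to each column, so $\tilde\mu_c \leq \tilde\lambda_c + d$, and the part of $\mu$ sitting below row $d$ is bounded in size by $|\lambda|$. The paper packages this slightly differently, via a ``greedy block removal'' algorithm: starting from $\mu$ and repeatedly stripping the maximal horizontal strip (which is equivalent to deleting the bottom box of every nonempty column), one lands on $(\mu_2,\ldots,\mu_h)$, then $(\mu_3,\ldots,\mu_h)$, and after $d$ steps on $\nu=(\mu_{d+1},\ldots)$; monotonicity of greedy removal then gives $\nu\leq\lambda$. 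Your conjugate-partition rephrasing ($|\nu| = \sum_c \max(0,\tilde\mu_c - d) \leq \sum_c \tilde\lambda_c = |\lambda|$) is a cleaner one-line way to get the same upper bound and sidesteps the implicit monotonicity argument that the paper's greedy-algorithm phrasing requires. For the lower bound, the paper simply observes that greedy removal on any $d$-padding of $\lambda$ terminates at $\lambda$, whereas you construct the explicit chain for $\mu=\lambda[m+\lambda_1,\ldots,m+\lambda_1]$ by adding one box per column to the first $\lambda_1$ columns in each of the $d$ rounds; both are correct, and yours makes the Pieri chain more concrete. In short, same key idea, with your column-counting framing being somewhat tighter in the upper-bound step.
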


\begin{proof}
For the purposes of this proof, we write $M(\lambda) := M(S^\lambda)$. According to Proposition \ref{indform} we may write
\[
M(\lambda)_n = \bigoplus_a \Ind_{\mathfrak{S}_m \times \mathfrak{S}_a}^{\mathfrak{S}_n} S^\lambda \boxtimes k
\]
Thus, applying Pieri's rule, the irreducible factors $S^\mu$ which appear in $M(\lambda)_n$ will precisely be those for which there is a chain
\[
\lambda \leq \lambda^{(1)} \leq \ldots \leq \lambda^{(d-1)} \leq \mu
\]
such that $\lambda^{(i)}$ is obtained from $\lambda^{(i-1)}$ by adding some number (perhaps zero) of boxes to distinct columns. Assume that $\mu$ is a partition with at least $d$ rows which is associated to an irreducible in $M(\lambda)_n$ and write $\mu = \nu[n_1,\ldots,n_d]$. $\mu$ will be a constituent of $M(\lambda)_n$ if and only if one can obtain $\lambda$ from $\mu$ by removing blocks from distinct columns in at most $d$ steps. One can apply a ``greedy'' block removal algorithm which, in each step, removes all blocks which can be removed. It is clear that $\mu$ is a constituent of $M(\lambda)_n$ if and only if such an algorithm eventually yields $\lambda$. Observe that after the first step of the greedy algorithm applied to $\mu$, one is left with the partition $(\mu_2,\ldots,\mu_h)$. Therefore, after $d$ steps one would be left with the partition $\nu$. It follows that $|\nu| \leq |\lambda|$, and so $\wt^d(M(\lambda)) \leq |\lambda|$. Conversely, it is clear that the greedy algorithm applied to any $d$-padding of $\lambda$ itself eventually terminates at $\lambda$ precisely. This shows that $\wt^d(M(\lambda)) = |\lambda|$.\\
\end{proof}

Observe that the above proposition implies that all finitely generated $\FI_d$-modules have bounded $d$-weight.\\

\begin{corollary}
If $V$ is generated in degree $\leq m$, then $\wt^d(V) \leq m$.\\
\end{corollary}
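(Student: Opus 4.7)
The plan is to reduce to Proposition \ref{mwt} via the presentation of $V$ and then use semisimplicity in characteristic $0$ to pass from a free cover down to $V$.

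First, I would unpack the generation hypothesis: there exists a surjection
\[
F := \bigoplus_{i \in I} M(m_i) \twoheadrightarrow V
\]
with $m_i \leq m$. Using $M(m_i) \cong M(k[\Sn_{m_i}])$ together with the isotypic decomposition $k[\Sn_{m_i}] \cong \bigoplus_{\lambda \vdash m_i} (\dim S^\lambda) \, S^\lambda$, I would write each free generator as a direct sum
\[
M(m_i) \cong \bigoplus_{\lambda \vdash m_i} (\dim S^\lambda)\, M(S^\lambda).
\]
Proposition \ref{mwt} then gives $\wt^d(M(S^\lambda)) = |\lambda| = m_i \leq m$ for every summand appearing.

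Next I would observe that $d$-weight is monotone under direct sums in the obvious way: an irreducible $S(\lambda)_{n_1,\ldots,n_d}$ is a summand of $(\bigoplus_j W^{(j)})_n$ if and only if it is a summand of some $W^{(j)}_n$, so the $d$-weight of a direct sum is the supremum of the $d$-weights of its summands. This yields $\wt^d(F) \leq m$.

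Finally, I would invoke semisimplicity. Since $k$ has characteristic $0$, Maschke's theorem guarantees that the surjection $F_n \twoheadrightarrow V_n$ of $\Sn_n$-representations splits, so $V_n$ is isomorphic to a direct summand of $F_n$. In particular, every irreducible constituent of $V_n$ is also an irreducible constituent of $F_n$. Hence any $S(\lambda)_{n_1,\ldots,n_d}$ appearing in $V_n$ must already appear in $F_n$, forcing $|\lambda| \leq \wt^d(F) \leq m$, which gives $\wt^d(V) \leq m$. There is no real obstacle in this argument; the only point to be careful about is the bookkeeping that an irreducible $S^{\mu}$ with $\mu = \lambda[n_1,\ldots,n_d]$ in the quotient has the same underlying $\lambda$ as when viewed inside $F_n$, which is immediate from the uniqueness of the padding decomposition.
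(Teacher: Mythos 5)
Your proof is correct and is precisely the unwinding of the argument the paper leaves implicit: the paper states that Proposition~\ref{mwt} "implies" the corollary and gives no further detail, and the intended route is exactly the one you spell out (free cover, decompose $M(m_i)$ into $M(S^\lambda)$ pieces, apply Proposition~\ref{mwt}, then use characteristic-$0$ semisimplicity to pass constituents from the cover to the quotient). No discrepancy to report.
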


The proof of the above proposition also motivates the following observation about the irreducible constituents of free modules.\\

\begin{proposition}\label{coeffstab}
Let $W$ be a finite dimensional $k[\Sn_m]$-module, let $\mu$ be a partition, and let $c_{\mu,n_1,\ldots,n_d}$ be the multiplicity of $S(\mu)_{n_1,\ldots,n_d}$ in $M(W)$. Then the quantity $c_{\mu,n_1+l,\ldots,n_d+l}$ is eventually independent of $l$.\\
\end{proposition}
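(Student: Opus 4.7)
The plan is to reduce to $W = S^\lambda$ irreducible, interpret $c_{\mu,n_1,\ldots,n_d}$ as a count of semistandard Young tableaux (SSYTs) via iterated Pieri, and then show that for $l$ large the relevant skew shape decomposes into a ``middle'' region forced to admit a unique filling, together with ``left'' and ``right'' regions whose contributions are visibly independent of $l$.

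Because $k$ has characteristic $0$ and $W \mapsto M(W)$ is additive in $W$, I may assume $W = S^\lambda$ for some $\lambda \vdash m$. Applying Proposition \ref{indform} and iterating Pieri's rule exactly as in the proof of Proposition \ref{mwt}, the multiplicity $c_{\mu,n_1,\ldots,n_d}$ equals the number of chains $\lambda = \lambda^{(0)} \subseteq \cdots \subseteq \lambda^{(d)} = \mu[n_1,\ldots,n_d]$ of partitions with each $\lambda^{(i)}/\lambda^{(i-1)}$ a horizontal strip. Labelling each cell added at step $i$ by the symbol $i$, this is the number of SSYTs of skew shape $\mu[n_1,\ldots,n_d]/\lambda$ with entries in $\{1,\ldots,d\}$ (weakly increasing along rows, strictly increasing down columns).

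Set $\nu_l := \mu[n_1+l,\ldots,n_d+l]$, $\Theta_l := \nu_l/\lambda$, $b := \max(\lambda_1,\mu_1)$, and $C_l := n_d + l - |\mu|$. For $l$ large enough that $b < C_l$, I partition the columns of $\Theta_l$ into a \emph{left} block (columns $c \leq b$), a \emph{middle} block ($b < c \leq C_l$), and a \emph{right} block ($C_l < c \leq n_1 + l - |\mu|$). Any column $c$ in the middle block lies strictly to the right of $\lambda$ (whose columns are all $\leq \lambda_1 \leq b$) and of the $\mu$-tail of $\nu_l$ occupying rows $d+1,\ldots,d+\ell(\mu)$ (whose columns are all $\leq \mu_1 \leq b$), while still satisfying $c \leq n_i + l - |\mu|$ for each $i \leq d$. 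Thus every middle column of $\Theta_l$ has exactly $d$ cells, one in each of rows $1,\ldots,d$, and the column-strict, $[d]$-valued constraint forces the unique filling $(1,2,\ldots,d)$ read from top to bottom. The left block has at most $(d+\ell(\mu))\,b$ cells with a shape determined by $\lambda$ and $\mu$; the right block, after horizontal translation by $-C_l$, is the fixed staircase with row lengths $(n_1 - n_d, \ldots, n_{d-1} - n_d)$. Both shapes are independent of $l$.

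Since every middle column is rigidly filled $(1,2,\ldots,d)$ from top to bottom, the only interaction between the left/right blocks and the remainder of the tableau is row-weak increase across their boundary with the middle: in row $i$, the last left-cell must carry an entry $\leq i$, and the first right-cell must carry an entry $\geq i$. Together with the usual SSYT rules inside each block, these constraints depend only on $\lambda, \mu, n_1, \ldots, n_d$. Consequently, for all $l \gg 0$,
\[
c_{\mu, n_1+l, \ldots, n_d+l} = N_L \cdot N_R,
\]
with $N_L, N_R$ independent of $l$, which proves the proposition. The main obstacle is the careful bookkeeping of this decomposition, in particular confirming that for $l$ large no column outside the left block picks up a contribution from $\lambda$ or the $\mu$-tail, and handling the case $\ell(\lambda) > d$ in which $\lambda$ reaches into the $\mu$-rows (which forces $\lambda_{d+j} \leq \mu_j$ for all relevant $j$ on pain of $c_{\mu,\ldots} \equiv 0$, a case which is trivially constant in $l$).
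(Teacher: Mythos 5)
Your proof is correct and follows essentially the same route as the paper: reduce to $W = S^\lambda$, reinterpret the multiplicity via iterated Pieri's rule as a combinatorial count, and observe that for $l \gg 0$ the ``middle'' portion of the padded shape is rigidly forced, so the only freedom lives in the two bounded end regions (your left and right blocks, which the paper describes informally as ``the frame $(n_1-n_d,\ldots,n_{d-1}-n_d)$'' and ``the boxes below the pre-colored copy of $\lambda$''). Your reformulation as counting $[d]$-valued semistandard tableaux of skew shape, together with the explicit column partition at the thresholds $b = \max(\lambda_1,\mu_1)$ and $C_l = n_d+l-|\mu|$, makes the paper's greedy-darkening visualization precise and supplies the bookkeeping the paper elides.
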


\begin{proof}
For a partition $\lambda$, once again write $M(\lambda) := M(S^\lambda)$. It suffices to prove the proposition for the module $M(\lambda)$, where $\lambda \vdash m$. In fact, we will show that the proposed stability happens when $l + n_d \geq \lambda_1 + |\lambda| = \lambda_1 + m$.\\

Let $n_d+l \geq \lambda_1 + m$, and assume that $S(\mu)_{n_1+l,\ldots,n_d+l}$ is a constituent of $M(\lambda)_n$. According to Pieri's rule, the multiplicity of $S(\mu)_{n_1+l,\ldots,n_d+l}$ in $M(\lambda)_n$ will be the number of ways to obtain $\mu[n_1+l,\ldots,n_d+l]$ from $\lambda$ in at most $d$ stages, such that blocks are added to distinct columns in each stage. In this proof, we visualize this process as follows. We begin with the tableaux for $\mu[n_1+l,\ldots,n_d+l]$, with a copy of the tableaux of $\lambda$ in the upper left corner darkened. Note that if $\mu[n_1+l,\ldots,n_d+l]$ does not contain $\lambda$, then the multiplicity must be 0. If the relative sizes of $\mu$ and $\lambda$ are such that $\lambda$ is never contained in $\mu[n_1+l,\ldots,n_d+l]$, then our claim is proven. We may therefore assume that this is not the case. Note that, by our assumption on the size of $l$,
\[
\lambda_1 \leq n_d+l - m \leq n_d +l - |\mu|,
\]
from the previous lemma. It follows that a copy of $\lambda$ will appear for our claimed value of $l$.\\

Beginning with our pre-darkened copy of $\lambda$ inside the tableaux of $\mu[n_1+l,\ldots,n_d+l]$, we start darkening boxes in such a way that every column only receives one new darkened box and, when we are finished, the union of all darkened boxes is the tableaux of some partition. We repeat this process precisely $d$-times. Pieri's rule implies that the coefficient of $S(\mu)_{n_1+l,\ldots,n_d+l}$ in $M(\lambda)$ will be the number of ways that this procedure terminates with the entirety of $\mu[n_1+l,\ldots,n_d+l]$ darkened.\\

Because we only have $d$-steps, it follows that we must darken the first row of $\mu[n_1+l,\ldots,n_d+l]$ up to column $n_d+l- |\mu| = \mu[n_1+l,\ldots,n_d+l]_d$ in the first step. In fact, in the $i$-th step, row $i$ must be filled in to at least column $\mu[n_1+l,\ldots,n_d+l]_d$. It follows that the only choices one has during the entire process are the rate in which the frame $(n_1-n_d,\ldots,n_{d-1}-n_{d})$ is colored in, as well as the rate that the boxes below our pre-colored copy of $\lambda$ are filled. The latter choices are entirely dependent on the relative sizes of $\lambda$ and $\mu$, and one does not gain or lose choices as $l$ grows.\\
\end{proof}

This is the major piece in proving Theorem \ref{genrepstab} for free modules.\\

\begin{proposition}\label{relprojcase}
Let $W$ be a finite dimensional $k[\Sn_m]$-module, write $c_{\mu,n_1,\ldots,n_d}$ for the multiplicity of $S(\mu)_{n_1,\ldots,n_d}$ in $M(W)_n$, and let $\phi^i_n$ denote the induced map of the pair of the standard inclusion $[n] \hookrightarrow [n+1]$ with the color $i$. Then for all $n \geq m$ and all $l \gg 0$,
\begin{enumerate}
\item $\cap_i \ker(\phi^i_n) = 0$;
\item the images of the $\phi^i_n$ span $V_{n+1}$ as an $k[\Sn_{n+1}]$-module;
\item $c_{\mu,n_1 + l,\ldots,n_d+l}$ is independent of $l$;\\
\end{enumerate}
\end{proposition}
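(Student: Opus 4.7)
The plan is to reduce the proposition to an explicit combinatorial description of $M(W)_n$ as a direct sum parametrized by orbit data. Note that conclusion (3) is already established: it is exactly Proposition \ref{coeffstab} applied to $M(W)$. So the substantive work is to verify (1) and (2) in a form compatible with the decomposition.

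First, I would record the following concrete description of $M(W)_n$. The right $\Sn_m$-action on $\Hom_{\FI_d}([m],[n])$ by precomposition is free, with orbits in bijection with pairs $(S,g)$ where $S \subseteq [n]$ has $|S|=m$ and $g:[n]\setminus S \to [d]$ is a coloring; indeed, the orbit of any $(f,g)$ is determined by the image $S = \mathrm{im}(f)$ and by $g$. Consequently $M(m)_n$ is free as a right $k[\Sn_m]$-module with basis indexed by such pairs $(S,g)$, and therefore
\[
M(W)_n \;\cong\; \bigoplus_{(S,g)} W_{(S,g)}, \qquad W_{(S,g)} \cong W.
\]
Chasing definitions, the transition map $\phi^i_n$ sends the summand $W_{(S,g)} \subseteq M(W)_n$ by the identity on $W$ onto the summand $W_{(S,\tilde g)} \subseteq M(W)_{n+1}$, where $\tilde g$ extends $g$ to $[n+1]\setminus S$ by setting $\tilde g(n+1) = i$ (note $n+1 \notin S$, so this makes sense).

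With this in hand, (1) is immediate and even stronger than claimed: each $\phi^i_n$ is individually injective, since on each summand it is the identity map $W \to W$ onto a distinct summand of the target. Hence already $\ker \phi^i_n = 0$ for every $i$, so certainly $\cap_i \ker \phi^i_n = 0$.

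For (2), I distinguish two cases for a target summand $W_{(S',g')} \subseteq M(W)_{n+1}$. If $n+1 \notin S'$, then letting $i = g'(n+1)$ and $g = g'|_{[n]\setminus S'}$, we have $W_{(S',g')} = \phi^i_n(W_{(S,g)})$ with $S = S' \subseteq [n]$. If instead $n+1 \in S'$, then since $n \geq m = |S'|$ the complement $[n+1]\setminus S'$ is nonempty; choosing any $k$ in it and applying the transposition $(n+1\ k) \in \Sn_{n+1}$ carries $W_{(S',g')}$ to a summand $W_{(S'',g'')}$ with $n+1 \notin S''$, which is hit by some $\phi^i_n$ by the previous case. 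Thus every summand of $M(W)_{n+1}$ lies in the $\Sn_{n+1}$-span of $\bigcup_i \mathrm{im}(\phi^i_n)$, proving (2).

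There is no real obstacle; the only subtlety is setting up the orbit decomposition carefully and verifying that $\phi^i_n$ acts as described with respect to it, both of which are bookkeeping from the definition of composition in $\FI_d$. The hypothesis $n \geq m$ enters exactly once, in the case analysis for (2) to guarantee that $[n+1]\setminus S'$ is nonempty.
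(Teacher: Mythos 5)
Your proof is correct and follows essentially the same route as the paper: condition (3) is exactly Proposition~\ref{coeffstab}, and conditions (1)--(2) are read off the structure of $M(W)$. The paper dispatches (1)--(2) with ``clear from the definition''; your explicit decomposition of $M(W)_n$ into summands $W_{(S,g)}$ indexed by orbit data, together with the identification of how $\phi^i_n$ and the $\Sn_{n+1}$-action permute those summands, is a correct and careful unwinding of that claim, and you correctly identify $n\ge m$ as the point where the spanning argument requires the complement of $S'$ in $[n+1]$ to be nonempty.
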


\begin{proof}
As before, it suffices to prove the claim in the case of $M(\lambda) := M(S^{\lambda})$. The first two properties above are clear from the definition of $M(\lambda)$. In fact, in this case all the maps $\phi^i_n$ are injective. Proposition \ref{coeffstab} implies the third property.\\
\end{proof}

\subsection{The Proof of Theorem \ref{genrepstab}}

We begin with the following Theorem of Sam and Snowden.\\

\begin{theorem}[\cite{SS3}]\label{ktheory}
The Grothendieck group of finitely generated $\FI_d$-modules over $k$ is generated by classes of free objects, as well as classes of modules whose Hilbert function is $o(d^n)$.\\
\end{theorem}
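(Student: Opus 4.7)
The plan is to identify $K_0(\FI_d\fMod)$ via a Serre-quotient argument. Define $\mathcal{T} \subseteq \FI_d\fMod$ to be the full subcategory of modules whose Hilbert function is $o(d^n)$, equivalently those with $p_d^V = 0$. First I would verify that $\mathcal{T}$ is a Serre subcategory: given a short exact sequence $0 \to V' \to V \to V'' \to 0$, additivity of Hilbert functions gives $p_d^V = p_d^{V'} + p_d^{V''}$, and since each polynomial $p_d^{(-)}$ takes non-negative values for large $n$ (being an asymptotic coefficient of a dimension function), the vanishing of $p_d^V$ is equivalent to the simultaneous vanishing of $p_d^{V'}$ and $p_d^{V''}$. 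Hence $\mathcal{T}$ is closed under subobjects, quotients, and extensions, and the standard right-exact localisation sequence
\[
K_0(\mathcal{T}) \to K_0(\FI_d\fMod) \to K_0(\FI_d\fMod/\mathcal{T}) \to 0
\]
reduces the theorem to showing that classes of free modules generate $K_0(\FI_d\fMod/\mathcal{T})$.

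Next, I would attempt the stronger assertion that the quotient category is semisimple with simple objects indexed by partitions, represented by images of the free modules $M(S^\lambda)$. I would induct on $\wt^d(V)$, which is finite for any finitely generated $V$ by the corollary to Proposition \ref{mwt}. For the inductive step, use the padded decomposition from Section 3.1 to isolate the top-weight summands $S(\lambda)_{n_1,\ldots,n_d}$ of $V_n$ with $|\lambda| = \wt^d(V)$. By Proposition \ref{coeffstab} the multiplicities $c_{\lambda,n_1+l,\ldots,n_d+l}$ in $M(S^\lambda)$ stabilise for $l \gg 0$, so one may choose non-negative integers $c_\lambda$ such that $V$ and $F := \bigoplus_{|\lambda|=\wt^d(V)} M(S^\lambda)^{\oplus c_\lambda}$ carry matching stable multiplicities of top-weight padded representations. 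Using the adjunction $\Hom_{\FI_d\fMod}(M(W), V) = \Hom_{\Sn_m}(W, V_m)$, lift top-weight generators of $V$ to an explicit morphism $F \to V$, and argue that the kernel and cokernel of this map either lie in $\mathcal{T}$ or have strictly smaller $d$-weight.

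The main obstacle is making this matching construction precise enough that the remainder really lies in $\mathcal{T}$ and not merely at lower weight. This needs two ingredients. First, one must check that contributions to the leading $d^n$ coefficient of the Hilbert function arise only from fully padded (length-$d$) constituents $S(\lambda)_{n_1,\ldots,n_d}$, which follows from a hook-length estimate: irreducibles associated to partitions with fewer than $d$ rows contribute too slowly to affect $p_d$. Second, one must verify that after balancing top-weight multiplicities via the chosen $c_\lambda$, the $p_d$-coefficient of the difference $V - F$ genuinely vanishes, so the difference lies in $\mathcal{T}$; this is where Proposition \ref{relprojcase} for $F$ and the additivity of $p_d^{(-)}$ across short exact sequences combine. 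Once this matching is in place, the induction on $\wt^d$ closes, producing a presentation of every class in $K_0(\FI_d\fMod)$ as a $\Z$-combination of free classes and classes in $K_0(\mathcal{T})$.
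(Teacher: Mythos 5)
The paper offers no proof of Theorem~\ref{ktheory} --- it is cited from the unpublished work [SS3] of Sam and Snowden --- so there is no in-text argument to compare yours against; I can only assess it on its own terms.

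Your Serre-subcategory framework is correctly set up: additivity of Hilbert functions across short exact sequences, the eventual non-negativity of $p_d^V$ (otherwise $p_d^V(n)d^n$ would dominate and force $\dim_k V_n < 0$ for $n \gg 0$), and the right-exact localization sequence are all sound. The genuine gap is the matching step. You want non-negative integers $c_\lambda$ so that $F := \bigoplus_{|\lambda| = \wt^d(V)} M(S^\lambda)^{\oplus c_\lambda}$ has the same stable top-weight multiplicities as $V$, but the stable value of $c_{\lambda,n_1+l,\ldots,n_d+l}$ is a function of the entire padding tuple $(n_1,\ldots,n_d)$, not a single number. Proposition~\ref{coeffstab} says only that these multiplicities stabilize in $l$; it gives no control over what the resulting function of $(n_1,\ldots,n_d)$ looks like for a general finitely generated $V$, and it is not clear that this function lies in the non-negative integer cone spanned by the corresponding functions for the $M(S^\lambda)$. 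Without such a positivity statement you cannot build the morphism $F \to V$ your argument requires; allowing negative $c_\lambda$ is fine in $K_0$ but then there is no morphism at all. Even granting the morphism, the claim that its kernel and cokernel lie in $\mathcal{T}$ or have strictly smaller $d$-weight is a structural fact about that specific map, not a formal consequence of equality of stable multiplicities. Finally, the auxiliary claim that constituents indexed by partitions with fewer than $d$ rows contribute only $o(d^n)$ to the Hilbert function does need its own hook-length estimate; the estimate in the paper's proof of Theorem~\ref{genrepstab} establishes the reverse direction (fully $d$-padded constituents grow at least like $d^n$). The localization skeleton is a natural route, but the assertion that free classes generate $K_0(\FI_d\fMod/\mathcal{T})$ is itself a substantive theorem that your outline leaves unproven.
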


Recall from Theorem \ref{noeth} that if $V$ is a finitely generated $\FI_d$-module, then there exists polynomials $p_1^V,\ldots,p_d^V \in \Q[x]$ such that $\dim_k(V_n) = p_1^V(n) + p_2V(n)2^n + \ldots + p_d^V(n)d^n$. One can interpret the above theorem of Sam and Snowden as saying that the $p_d(n)d^n$ term in this formula is realized in the Grothendieck group by sums of classes of free objects, while all lower terms arise from some slower growing modules.\\

\begin{proof}[Proof of Theorem \ref{genrepstab}]
First assume that $V$ satisfies all conditions of Theorem \ref{genrepstab}, and that $V_n$ is finite dimensional for all $n$. Then there is some $N$ such that the images of the $\phi_n^i$ span $V_{n+1}$ as a $k[\Sn_{n+1}]$-module for all $n \geq N$. For each $i \leq N$, let $\{v_{m,i}\}_{m=1}^{\dim_k(V_i)}$ be a basis for $V_i$. Then $\cup_{i,m} \{v_{m,i}\}$ is a generating set for $V$ by definition. This shows that $V$ is finitely generated.\\

Conversely assume that $V$ is finitely generated. The fact that $V_n$ is finite dimensional for all $n$ is clear from this. Next, set $K_n := \cap_i \ker(\phi_n^i)$. We make $K_n$ into an $k[\Sn_n]$-module by restricting the action of $V_n$, and we make the collection of $K_n$ into an $\FI_d$-module by making all the transition maps trivial. It follows from definition that $K$ is a submodule of $V$. Theorem \ref{noeth} implies that $K$ must have a finite generating set $\{v_i\}$. Because the transition maps of $K$ are all trivial, finite generation implies that it must be supported in only finitely many degrees. In particular, $K_n = \cap_i \ker(\phi_n^i) = 0$ for $n \gg 0$.\\

The definition of finite generation implies the second statement. It therefore remains to show that $c_{\lambda,n_1+l,\ldots,n_d+l}$ is eventually constant. Theorem \ref{ktheory} implies that it suffices to show the claim when $V = M(W)$, for some $k[\Sn_m]$-module $W$, or when the Hilbert function of $V$ is $o(d^n)$. Proposition \ref{relprojcase} implies the statement when $V = M(W)$. We therefore assume that $V$ has a Hilbert function which is $o(d^n)$. We will prove the stronger claim that $c_{\lambda,n_1+l,\ldots,n_d+l} = 0$ for $l \gg 0$. This is clear if $d = 1$, and so we assume that $d > 1$.\\

Our strategy to prove this stronger claim is to show that $\dim_k(S(\lambda)_{n_1+l,\ldots,n_d+l})$ grows, as a function of $l$, faster than our assumptions permit.\\

Let $\lambda$ be a partition, fix $n_1 \geq \ldots \geq n_d \geq |\lambda| + \lambda_1$, and consider the tableaux associated to $\lambda[n_1+l,\ldots,n_d + l]$ for some positive integer $l$. Define $s := n_1+l-|\lambda|$ and $r := \sum_{i} n_1-n_i$. We note that $r$ is the number of blocks missing to make the first $d$ rows of $\lambda[n_1+l,\ldots,n_d + l]$ into a rectangle, while $s$ is the length of the first row. The hook formula tells us that 
\[
F(l) := \dim_k(S(\lambda)_{n_1+l,\ldots,n_d + l}) = \frac{(ds+|\lambda|-r)!}{\prod_{(i,j) \in \lambda[n_1+l,\ldots,n_d+l]}H(i,j)}.
\]
Observe that we may multiply this expression by the polynomial $q(l) := \prod_{i = 1}^{(d-1)|\lambda| + r}(ds + |\lambda| -r +i)$ to obtain
\[
q(l)F(l) = \frac{(ds+d|\lambda|)!}{\prod_{i,j}H(i,j)}.
\]
The function $q(l)$ is indeed a polynomial, as $r$ is be independent of $l$.\\

At any box $(i,j)$ in the first $d$ rows of $\lambda[n_1+l,\ldots,n_d + l]$ the hook with corner $(i,j)$ is shorter horizontally than a hook which stretches to the final column $s$, and it's shorter vertically than if it stretched down past the remaining $d-i$ padded rows, and then through $|\lambda|$ blocks. This gives a lower bound,
\[
q(l)F(l) \geq \frac{(ds+d|\lambda|)!}{(d-1+ s +|\lambda|)(d-1+ (s-1) + |\lambda|)\cdots (d-1 + 1+ |\lambda|)(d-2 + s+|\lambda|)\cdots (1+|\lambda|)C_\lambda}
\]
where $C_\lambda$ is a constant depending only on $\lambda$, which is the product of the sizes of hooks completely contained inside $\lambda$. Multiplying both sides of this inequality by terms of the form $(d-m + s + |\lambda|)$ with $1 \leq m \leq d$, the constant $C_\lambda$, and the constant $\frac{1}{\prod_{m = 0}^{d-1} (d-m + |\lambda|)!}$, this becomes
\[
\widetilde{q}(l)F(l) \geq \frac{(ds+d|\lambda|)!}{(s+|\lambda|)!(s+|\lambda|)!\cdots (s+|\lambda|)!} = \binom{ds+d|\lambda|}{s+|\lambda|,s+|\lambda|,\ldots,s+|\lambda|}
\]
for some polynomial $\widetilde{q}(l)$.\\

Elementary combinatorics tells us that the multinomial coefficient on the right hand side of the above inequality is the largest among all coefficients with power $ds+d|\lambda|$ in $d$ parts. This implies that it is bounded from below by the average of all such coefficients. The sum of all multinomial coefficients of a given power is an exponential, and the total number of such coefficients is a polynomial in the power. Thus,
\[
\widehat{q}(l)F(l) \geq d^{ds+d|\lambda|}
\]
for some polynomial $\widehat{q}(l)$. By assumption, our module has dimension growth $o(d^n)$, and $s$ grows linearly with $l$. This shows that the multiplicity $c_{\lambda,n_1+l,\ldots,n_d+l}$ must eventually be zero, as desired.\\
\end{proof}

\subsection{The Multiplicities of the Trivial Representation}

In this section we consider multiplicities of the trivial representations within in the module $V^{<d}$. In the next section, we will extend the main result of this section (Theorem \ref{trivmult}) to multiplicities of more general irreducible representations (see Theorem \ref{polystab}). We will find that the proof of the more general case amounts to a twist of the trivial representation case, and so we treat this case separately.\\

To better understand the multiplicity of the trivial representation, we make use of the coinvariants functor first introduced in \cite{CEF} for $\FI$-modules. The motivation for most of what follows can be found in \cite[Section 3.1]{CEF}.\\

\begin{definition}
Let $R$ denote the polynomial ring $k[x_1,\ldots,x_d]$, and write $\text{Mod}_R^{gr}$ for the category of non-negatively graded $R$-modules. The \textbf{coinvariants functor} $\Phi:\FI_d\Mod \rightarrow \text{Mod}_R^{gr}$ is defined by 
\[
\Phi(V) = \bigoplus_n V_n \otimes_{k[\Sn_n]} S^{(n)}.
\]
where $S^{(n)} = k$ is the trivial representation.
\end{definition}

One should think that taking coinvariants identifies transition maps whenever their coloring uses the same colors with the same frequency. After doing this, the maps $\phi^i_n$, as in the statement of Theorem \ref{genrepstab}, become the action by the variables $x_i$ in $R$. One should also observe that if $c_n$ is the multiplicity of the trivial representation in $V_n$, then $\dim_k \Phi(V)_n = c_n$.\\

We briefly record the following elementary properties of $\Phi$.\\

\begin{proposition}
The functor $\Phi:\FI_d\Mod \rightarrow \text{Mod}_R^{gr}$ enjoys the following properties.
\begin{enumerate}
\item $\Phi$ is exact;
\item $\Phi(M(m)) = R(-m)$, where $R(-m)$ denotes the graded twist of $R$ by $-m$;
\item If $V$ is finitely generated, then so is $\Phi(V)$.\\
\end{enumerate}
\end{proposition}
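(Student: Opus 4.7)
The plan is to dispatch the three claims in sequence, leveraging characteristic zero throughout together with the combinatorial description of $M(m)$ supplied by Proposition \ref{indform}.

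For (1), the key observation is that in characteristic $0$ the group algebra $k[\Sn_n]$ is semisimple, so every $k[\Sn_n]$-module is projective; in particular $S^{(n)} = k$ is projective, making $-\otimes_{k[\Sn_n]} S^{(n)}$ an exact functor from $k[\Sn_n]\Mod$ to $k\Mod$. Exactness of $\Phi$ is then checked degree by degree: a short exact sequence of $\FI_d$-modules is, at each $n$, a short exact sequence of $k[\Sn_n]$-modules, which remains short exact after tensoring, and the direct sum of exact sequences is exact.

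For (2), I would first compute $\Phi(M(m))_n$ as a graded vector space and then upgrade to an isomorphism of graded $R$-modules. By Proposition \ref{indform},
\[
M(m)_n \;=\; \bigoplus_{a} \Ind_{\Sn_m \times \Sn_a}^{\Sn_n} k\boxtimes k,
\]
with the sum over compositions $a = (a_1,\ldots,a_d)$ of $n-m$. Frobenius reciprocity shows that each summand contains the trivial $\Sn_n$-representation with multiplicity one, so $\Phi(M(m))_n$ acquires a natural basis indexed by such compositions $a$. I would identify this basis element with the monomial $x_1^{a_1}\cdots x_d^{a_d} \in R_{n-m}$, matching $\dim R(-m)_n = \binom{n-m+d-1}{d-1}$ on the nose. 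To verify the $R$-module structure, I would trace the action of $x_i$, which by construction corresponds to the induced map $\phi_n^i$ of the standard inclusion $[n]\hookrightarrow [n+1]$ with new element colored $i$. On orbits of $M(m)$, composing with this morphism sends the summand indexed by $a$ to the one indexed by $a + e_i$, which is exactly multiplication by $x_i$ under our identification.

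For (3), finite generation of $V$ provides a surjection $\bigoplus_{i=1}^N M(m_i) \twoheadrightarrow V$ from a finite direct sum of free modules. Applying $\Phi$ and using exactness from (1) together with (2) yields a surjection $\bigoplus_{i=1}^N R(-m_i) \twoheadrightarrow \Phi(V)$. The source is a finitely generated $R$-module, and since $R$ is Noetherian, the quotient $\Phi(V)$ is as well. The only step that requires genuine bookkeeping is part (2); everything else is formal once exactness and the free case are in hand, so the main obstacle is to confirm that the monomial identification on the basis of trivial isotypic components really intertwines the $R$-action with the induced maps $\phi_n^i$ rather than merely matching graded dimensions.
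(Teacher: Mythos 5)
Your proposal is correct and follows essentially the same route as the paper: semisimplicity for (1); a basis of the coinvariants of $M(m)_n$ indexed by compositions $a$ of $n-m$ of length $d$, identified with the monomials $x_1^{a_1}\cdots x_d^{a_d}$, together with a check that $\phi_n^i$ sends the summand $a$ to $a+e_i$, for (2); and formal consequences for (3). Two small inaccuracies worth flagging. First, you quoted Proposition \ref{indform} with $W = k$, writing $M(m)_n = \bigoplus_a \Ind_{\Sn_m\times\Sn_a}^{\Sn_n} k\boxtimes k$, but that is $M(S^{(m)})_n$; since $M(m)\cong M(k[\Sn_m])$, the correct expression is $\bigoplus_a \Ind_{\Sn_m\times\Sn_a}^{\Sn_n} k[\Sn_m]\boxtimes k$. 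Happily, Frobenius reciprocity still gives trivial-multiplicity one in each summand (because $\Hom_{\Sn_m}(k[\Sn_m],k) = k$), so your conclusion and dimension count survive; the paper avoids this by observing directly that each orbit piece $M(m)_{n,a}$ is a transitive $\Sn_n$-set and hence has one-dimensional coinvariants. Second, in (3) you invoke Noetherianity of $R$ to conclude $\Phi(V)$ is finitely generated, but a quotient of a finitely generated module is finitely generated over any ring; Noetherianity is not needed there.
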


\begin{proof}
The first statement follows immediately from the fact that $k$ is a field of characteristic 0.\\

Recall that for a composition $a = (a_1,\ldots,a_d)$ of $n-m$ of length $d$, we write $M(m)_{n,a}$ to denote the submodule of $M(m)_n$ spanned by basis vectors $(f,g)$, such that $|g^{-1}(i)| = a_i$. Each $M(m)_{n,a}$ is a permutation representation of $\Sn_n$, where $\Sn_n$ acts transitively. Therefore, $M(m)_n \otimes k$ has a basis in bijection with the set of all such compositions. The map $\phi_n^i$ acts on such a basis vector $(a_1,\ldots,a_n)$ via $\phi_n^i(a_1,\ldots,a_n) = (a_1,\ldots,a_{i-1},a_i+1,a_{i+1},\ldots,a_n)$. There is therefore a natural isomorphism of graded $R$ modules between $\Phi(M(m))$ and $R(-m)$ via
\[
(a_1,\ldots,a_d) \mapsto x_1^{a_1}\cdots x_n^{a_n}.
\]

The final statement follows immediately from the first two, and the definition of finite generation.\\
\end{proof}

It is a classically known fact that if $M$ is a finitely generated graded $R$-module, then there is a polynomial $p \in \Q[x]$ such that $\dim_k M_n = p(n)$ for all $n \gg 0$ (See \cite{E} for a proof). The discussion of this section therefore implies the following.\\

\begin{theorem}\label{trivmult}
Let $V$ be a finitely generated $\FI_d$-module, and write $c_n$ for the multiplicity of the trivial representation in $V_n$. Then there is a polynomial $p \in \Q[x]$ of degree $\leq d-1$ such that for all $n \gg 0$, $c_n = p(n)$.\\
\end{theorem}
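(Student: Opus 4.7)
The plan is to reduce the statement to a classical fact about Hilbert functions of finitely generated graded modules over a polynomial ring in $d$ variables, using the coinvariants functor $\Phi$ that has just been set up.

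First I would observe the identity $\dim_k \Phi(V)_n = c_n$, which has already been noted: since $S^{(n)}$ is the trivial representation of $\Sn_n$, the coinvariants $V_n \otimes_{k[\Sn_n]} S^{(n)}$ record precisely the multiplicity of the trivial summand of $V_n$. So computing $c_n$ is the same as computing the Hilbert function of the graded $R$-module $\Phi(V)$, where $R = k[x_1, \ldots, x_d]$.

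Next I would invoke the three properties of $\Phi$ proved in the preceding proposition. Exactness and the computation $\Phi(M(m)) = R(-m)$ together with finite generation of $V$ imply that $\Phi(V)$ is a finitely generated graded $R$-module: choose a finite presentation $\bigoplus_j M(m_j) \twoheadrightarrow V$, apply $\Phi$, and use exactness to see that $\Phi(V)$ is a quotient of $\bigoplus_j R(-m_j)$.

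The final step is the classical Hilbert polynomial theorem: for any finitely generated graded module $M$ over the polynomial ring $R = k[x_1, \ldots, x_d]$ in $d$ variables, there exists a polynomial $p \in \Q[x]$ of degree $\leq d-1$ such that $\dim_k M_n = p(n)$ for $n \gg 0$ (see, e.g., the reference already cited as \cite{E}). Applying this to $M = \Phi(V)$ gives the desired polynomial $p$ with $c_n = p(n)$ for $n \gg 0$ and $\deg p \leq d-1$.

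There is essentially no obstacle here; the entire theorem is a packaging of already-established machinery. The only thing worth being careful about is the degree bound: one must remember that the Hilbert polynomial of a finitely generated graded $k[x_1,\ldots,x_d]$-module has degree at most $d-1$ (not $d$), since the free module $R$ itself has Hilbert function $\binom{n+d-1}{d-1}$, a polynomial of degree $d-1$ in $n$. This matches the bound in the statement of the theorem.
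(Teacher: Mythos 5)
Your proof is correct and follows precisely the route the paper takes: identify $c_n$ with the Hilbert function of $\Phi(V)$, use the just-established exactness, $\Phi(M(m)) = R(-m)$, and preservation of finite generation to conclude $\Phi(V)$ is a finitely generated graded $R$-module, and then invoke the classical Hilbert polynomial theorem for $R = k[x_1,\ldots,x_d]$. Your explicit remark justifying the degree bound $\leq d-1$ is a welcome addition the paper leaves implicit.
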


\subsection{The proof of Theorem \ref{polystab}}\label{thmb}

To finish the paper, we must prove Theorem \ref{polystab}. This theorem was proven in the case wherein $\lambda = (n)$ in the previous section, and we will find that the proof of the general case follows from this. The main trick for the reduction of the general case to the previous case was shown to the author by Steven Sam. The author would like to send his thanks to Professor Sam for aiding him in this way.\\

We begin with some notation from the representation theory of categories.\\

\begin{definition}
Let $\Ca$ and $\Ca'$ be (small) categories. A \textbf{$\Ca$-module} over a ring $R$ is a functor $V:\Ca \rightarrow R\Mod$. We say that a $\Ca$-module is \textbf{finitely generated} if there is a finite collection of elements $\{v_i\} \subseteq \bigsqcup_{X \in \Ca} V(\Ca)$ which is contained in no proper submodule. Assume that $V$ is a $\Ca$-module over some ring $R$, while $V'$ is a $\Ca'$-module over the same ring. Then the \textbf{(exterior) tensor product of $V$ an $V'$}, denoted $V \boxtimes V'$, is the $\Ca \times \Ca'$-module defined on points by
\[
(V \boxtimes V')(X,X') = V(X) \otimes_R V'(X')
\]
\text{}\\
\end{definition}

For the remainder of the section, We let $V$ be a finitely generated $\FI_d$-module over a field $k$ of characteristic 0. We also fix a partition $\lambda \vdash m$, and set $c_{\lambda,n}$ to be the multiplicity of $S(\lambda)_n$ in $V_n$. The primary strategy for proving Theorem \ref{polystab} is to twist a given $\FI_d$-module with an appropriately chosen $\FI$-module, so that the multiplicity of the trivial representation in the resulting module is equal to $c_{\lambda,n}$ at this point the previous section's results will imply Theorem \ref{polystab}.\\

In \cite[Proposition 3.4.1]{CEF}, Church, Ellenberg and Farb define a finitely generated $\FI$-module $S(\lambda)$ with the property that
\[
S(\lambda)_n = \begin{cases} 0 &\text{ if $n < m$}\\ S^{\lambda[n]} &\text{ otherwise.}\end{cases}
\]
This is the $\FI$-module with which we will twist $V$. To accomplish this formally, we first need some technical lemmas from the Gr\"obner theory of Sam and Snowden \cite{SS}.\\

\begin{definition}
Let $\Ca$ and $\Ca'$ be two categories, and let $\Psi:\Ca \rightarrow \Ca'$ be a (covariant) functor between these categories. Then we say that $\Psi$ \textbf{satisfies property (F)} if given any object $x$ of $\Ca'$, there is a finite list of objects $y_1,\ldots,y_r$ of $\Ca$ and morphisms $f_i:x \rightarrow \Psi(y_i)$, such that for any object $y$ of $\Ca$ and any morphism $f:x \rightarrow \Psi(y)$ in $\Ca'$, there is a morphism $g:y_i \rightarrow y$ in $\Ca$ for some $i$ such that $f = \Psi(g) \circ f_i$.\\
\end{definition}

Property $(\textbf{F})$ was initially considered by Sam and Snowden in \cite{SS}. In this work they show that this property is the key tool for connecting stability phenomena in the study of $\Ca'$-modules to stability phenomena in the study of $\Ca$-modules. For our purposes, we will only need the following lemmas.\\

\begin{lemma}[\cite{SS}, Proposition 3.2.3]\label{lem1}
Let $\Psi:\Ca \rightarrow \Ca'$ be a functor, and write $\Psi^{\as}$ for the natural pullback $\Psi^{\as}:\Ca'\Mod \rightarrow \Ca\Mod$. Then $\Psi$ satisfies property (\textbf{F}) if and only if $\Psi^{\as}$ maps finitely generated $\Ca'$-modules to finitely generated $\Ca$-modules.\\
\end{lemma}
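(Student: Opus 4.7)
The plan is to pivot the argument on the representable $\Ca'$-modules $P_x$ defined by $P_x(z) = R[\Hom_{\Ca'}(x,z)]$, with a morphism $h:z \to z'$ acting by post-composition. The key features are that $P_x$ is generated as a $\Ca'$-module by the single element $\text{id}_x \in P_x(x)$, and that its pullback is given by $\Psi^{\as}(P_x)(y) = R[\Hom_{\Ca'}(x,\Psi(y))]$, the free $R$-module on the set of morphisms $x \to \Psi(y)$ in $\Ca'$. Property (\textbf{F}) at $x$ is essentially the combinatorial reflection of finite generation of this pullback in $\Ca\Mod$, and the whole proof consists of making this translation precise in both directions.

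For the forward direction, assume $\Psi$ satisfies (\textbf{F}) and let $W$ be a $\Ca'$-module generated by finitely many elements $w_j \in W(x_j)$. Applying (\textbf{F}) at each $x_j$ produces finitely many objects $y_{j,\ell}$ of $\Ca$ and morphisms $f_{j,\ell}:x_j \to \Psi(y_{j,\ell})$ witnessing the property. I would verify that the finite collection $W(f_{j,\ell})(w_j) \in \Psi^{\as}(W)(y_{j,\ell})$ generates $\Psi^{\as}(W)$: any $v \in W(\Psi(y)) = \Psi^{\as}(W)(y)$ is an $R$-linear combination of elements $W(f)(w_j)$ for morphisms $f:x_j \to \Psi(y)$, and by (\textbf{F}) each such $f$ factors as $f = \Psi(g) \circ f_{j,\ell}$ for some $g:y_{j,\ell} \to y$ in $\Ca$, so that $W(f)(w_j) = \Psi^{\as}(W)(g)(W(f_{j,\ell})(w_j))$ already lies in the $\Ca$-submodule generated by our chosen finite set.

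For the converse, I would apply the hypothesis to $P_x$ itself to obtain a finite generating set $v_i \in \Psi^{\as}(P_x)(y_i)$. Expanding each $v_i$ in the basis of morphisms yields a finite family $f_{i,k}:x \to \Psi(y_i)$ in $\Ca'$, and the submodule of $\Psi^{\as}(P_x)$ generated by the $v_i$ is, at each $y$, the $R$-linear span of the elements $\Psi(g) \circ f_{i,k}$ as $g$ ranges over morphisms $y_i \to y$ in $\Ca$ and $i,k$ vary. I expect the main subtlety --- really the only nontrivial point in the argument --- to be the following observation: because $\Psi^{\as}(P_x)(y)$ is the free $R$-module on the set $\Hom_{\Ca'}(x,\Psi(y))$, a single basis element $f$ can lie in the $R$-span of other basis elements $\{\Psi(g) \circ f_{i,k}\}$ only if it literally coincides with one of them. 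Applied to an arbitrary morphism $f:x \to \Psi(y)$ viewed as a basis element of $\Psi^{\as}(P_x)(y)$, this forces $f = \Psi(g) \circ f_{i,k}$ for some choice of $i$, $k$, and $g:y_i \to y$, which is exactly property (\textbf{F}) at $x$ with witnessing data $\{y_i\}$ and $\{f_{i,k}\}$. Since $x$ was arbitrary, the proof is complete.
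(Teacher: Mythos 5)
The paper does not actually prove this lemma --- it cites it directly from \cite{SS}, Proposition 3.2.3 --- so there is no in-paper argument to compare against. Your proof is correct and is the standard argument via the representable (principal projective) modules $P_x$, which is essentially what Sam--Snowden do. One small imprecision is worth flagging in the converse direction: the submodule of $\Psi^{\as}(P_x)$ generated by the $v_i$ is, at an object $y$, the $R$-span of the elements $\Psi^{\as}(P_x)(g)(v_i) = \sum_k c_{i,k}\bigl(\Psi(g)\circ f_{i,k}\bigr)$ as $g$ ranges over $\Hom_{\Ca}(y_i,y)$; this is contained in, but in general strictly smaller than, the $R$-span of the individual basis elements $\Psi(g)\circ f_{i,k}$. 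Containment is all your freeness argument needs (a basis element of a free $R$-module lying in the $R$-span of a set of basis elements must already belong to that set, assuming $R\neq 0$), so the conclusion stands, but the phrase ``is the $R$-linear span'' should read ``is contained in the $R$-linear span.''
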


\begin{lemma}\label{lem2}
Let $\Psi:\FI_d \rightarrow \FI_d \times \FI$ denote the functor defined by the assignments
\[
\Psi([n]) = ([n],[n]), \hspace{1cm} \Psi((f,g)) = ((f,g),f).
\]
Then $\Psi$ satisfies property (\textbf{F}).\\
\end{lemma}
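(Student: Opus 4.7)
The plan is to verify property (\textbf{F}) for $\Psi$ directly by producing, for each object $x$ of $\FI_d \times \FI$, an explicit finite list of ``canonical factoring morphisms.'' Fix $x = ([a],[b])$. An arbitrary morphism $f : x \to \Psi([n]) = ([n],[n])$ is a triple $((h,k), j)$, where $(h,k) : [a] \to [n]$ lives in $\FI_d$ and $j : [b] \to [n]$ is an injection. The key observation is that such an $f$ only really ``sees'' the subset $S := \mathrm{im}(h) \cup \mathrm{im}(j) \subseteq [n]$, whose cardinality is bounded by $a+b$; everything outside $S$ will get absorbed into the morphism $g$ through which we factor.

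Concretely, I would proceed as follows. Let $n' := |S|$ and let $\sigma : [n'] \hookrightarrow [n]$ be the unique order-preserving injection with image $S$. Then $h$ and $j$ factor uniquely as $h = \sigma \circ h'$ and $j = \sigma \circ j'$ for injections $h' : [a] \to [n']$ and $j' : [b] \to [n']$. The coloring $k$, which lives on $[n]\setminus\mathrm{im}(h)$, splits across the partition $[n]\setminus\mathrm{im}(h) = \bigl(\mathrm{im}(j)\setminus\mathrm{im}(h)\bigr) \sqcup \bigl([n]\setminus S\bigr)$: the first piece pulls back along $\sigma$ to a coloring $k'$ of $[n']\setminus\mathrm{im}(h')$, and the second piece is a coloring $\tilde{k}$ of $[n]\setminus\mathrm{im}(\sigma)$. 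Setting $y := [n]$, $y_i := [n']$, $f_i := ((h',k'),j')$, and $g := (\sigma, \tilde{k})$, the $\FI_d$ composition rule gives $\Psi(g) \circ f_i = f$: the coloring produced by the rule agrees with $k$ on $\mathrm{im}(j)\setminus\mathrm{im}(h)$ (pulled from $k'$ via $\sigma^{-1}$) and on $[n]\setminus S$ (pulled from $\tilde{k}$), which exhausts $[n]\setminus\mathrm{im}(h)$.

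It remains to confirm that only finitely many pairs $([n'], f_i)$ arise. Each such pair is determined by the data of an integer $n' \leq a+b$, two injections $h' : [a] \to [n']$ and $j' : [b] \to [n']$ with $\mathrm{im}(h') \cup \mathrm{im}(j') = [n']$, and a $d$-coloring $k'$ of the complement $[n']\setminus\mathrm{im}(h')$. Because $a$, $b$, and $d$ are fixed and $n'$ is bounded, this is a finite set of data, so the corresponding collection $\{f_i\}$ is finite. This finite collection verifies property (\textbf{F}) for the object $([a],[b])$, and since $([a],[b])$ was arbitrary, the lemma follows.

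The one point requiring genuine care is the bookkeeping for the coloring in the $\FI_d$ composition, since it involves splitting $k$ into two pieces and reassembling them via the explicit formula of Definition \ref{catfid}; I expect this compatibility check to be the most error-prone step, but it is a direct unraveling of definitions rather than a substantive difficulty.
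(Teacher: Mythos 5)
Your proof is correct, and it is a genuinely different — and in fact more careful — argument than the one in the paper. The paper fixes $y_i = [\max(r,s)]$ for every $i$ and claims one can always factor a morphism $f : ([r],[s]) \to ([l],[l])$ through $([\max(r,s)],[\max(r,s)])$ by taking the $\FI_d$-leg of $g$ to be the standard inclusion. But that factorization step does not go through in general: for instance with $d = 1$, $r = s = 1$, and $f : ([1],[1]) \to ([2],[2])$ given on the $\FI_d$ side by $1 \mapsto 2$ and on the $\FI$ side by $1 \mapsto 1$, any factorization $\Psi(g) \circ f_i$ with $g : [1] \to [2]$ forces both legs of $f$ to have the same image, which they do not. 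The obstruction is precisely that the two legs of $f$ may have overlapping-but-distinct images inside $[l]$, so their union can have size up to $r+s$. Your proof addresses this head on: you let $y$ range over $[n']$ for $\max(a,b) \le n' \le a+b$, take $\sigma$ to be the order-preserving injection onto $S = \mathrm{im}(h) \cup \mathrm{im}(j)$, and split the coloring $k$ across $S \setminus \mathrm{im}(h)$ (absorbed into $f_i$) and $[n]\setminus S$ (absorbed into $g$). The composition bookkeeping you flag as delicate does work out as you describe — the $\FI_d$ composition rule reassembles $k$ from $k'$ and $\tilde{k}$ exactly because $\{\,\mathrm{im}(j)\setminus\mathrm{im}(h),\ [n]\setminus S\,\}$ partitions $[n]\setminus\mathrm{im}(h)$ — and finiteness is clear since $n' \le a+b$. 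So your argument supplies the bound $r+s$ that the paper's sketch is missing, and as a bonus gives an explicit canonical choice of $(y_i, f_i, g)$ rather than an existence assertion.
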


\begin{proof}
Let $([r],[s])$ be an object in $\FI_d \times \FI$, and assume that $r < s$. Looking at the definition of property (\textbf{F}), we set $y_i = [s]$ and allow the $f_i:([r],[s]) \rightarrow \Psi(y_i)$ to vary across all of the (finitely many) morphisms $([r],[s]) \rightarrow ([s],[s]) = \Psi(y_i)$ in $\FI_d \times \FI$. Now given any $[l]$, and any morphism $f:([r],[s]) \rightarrow \Psi([l]) \in \FI_d \times \FI$, we set $(h,h'):[s] \rightarrow [l] \in \FI_d$ to be the pair of the standard inclusion with a coloring chosen to agree with the coloring associated to $f$ on the compliment of $[s]$ in $[l]$. It follows there is some map in $\FI_d \times \FI$, $g:([r],[s]) \rightarrow ([s],[s])$ such that $\Psi(h,h') \circ g = f$. Such a $g$ will be included among the $f_i$ by how the $f_i$ were chosen. The case where $r>s$ is the same.\\
\end{proof}

This is all we need to prove Theorem \ref{polystab}.\\

\begin{proof}[proof of Theorem \ref{polystab}]
Consider the $(\FI_d \times \FI)$-module $V \boxtimes S(\lambda)$. This module is finitely generated, as both $V$ and $S(\lambda)$ are. Lemmas \ref{lem1} and \ref{lem2} imply that $\Psi^{\as}(V \boxtimes S(\lambda))$ is also a finitely generated $\FI_d$-module. By construction, the multiplicity of the trivial representation in $\Psi^{\as}(V \boxtimes S(\lambda))_n$ is precisely $c_{\lambda,n}$. Theorem \ref{trivmult} now implies Theorem \ref{polystab}.\\
\end{proof}

\end{document}